\newcommand{\R}    {{\mathbb R}}
\newcommand{\N}    {{\mathbb N}}
\newcommand{\eps}  {\varepsilon}
\newcommand{\E}    {\operatorname{E}}
\newcommand{\Var}  {\operatorname{Var}} 
\newcommand{\rbit} {\operatorname{rbit}}
\newcommand{\quant}{\operatorname{quant}}
\newcommand{\bp}   {{\boldsymbol{p}}}
\newcommand{\res}  {\mathrm{res}}
\newcommand{\Lip}  {\mathrm{Lip}}
\newcommand{\M}    {\operatorname{\mathfrak M}}
\newcommand{\U}    {\operatorname{\mathfrak U}}
\newcommand{\V}    {\operatorname{\mathfrak R}}
\newcommand{\F}    {\operatorname{\mathfrak F}}
\newcommand{\supp} {\operatorname{supp}}
\newcommand{\spa}  {\operatorname{span}}
\newcommand{\cost} {\operatorname{cost}}
\theoremstyle{plain}
\newtheorem{lemma}{Lemma}
\newtheorem{theo}{Theorem}
\newtheorem{cor}{Corollary}
\theoremstyle{definition}
\newtheorem{rem}{Remark}
\newcommand{\overbar}[1]%
{\mkern 1.5mu\overline{\mkern-1.5mu#1\mkern-1.5mu}\mkern 1.5mu}
\begin{document}

\title%
[Random Bit Quadrature]
{Random Bit Quadrature and Approximation of \\[3pt]
Distributions on Hilbert Spaces}

\author[Giles]
{Michael B.\ Giles}
\address{Mathematical Institute\\
University of Oxford\\
Oxford OX2 6GG\\
England} 
\email{mike.giles@maths.ox.ac.uk} 

\author[Hefter]
{Mario Hefter}
\address{Fachbereich Mathematik\\
Technische Universit\"at Kaisers\-lautern\\
Postfach 3049\\
67653 Kaiserslautern\\
Germany}
\email{\{hefter,lmayer,ritter\}@mathematik.uni-kl.de}

\author[Mayer]
{Lukas Mayer}

\author[Ritter]
{Klaus Ritter}

\begin{abstract}
We study the approximation of expectations $\E(f(X))$
for Gaussian random elements $X$ with values in a separable
Hilbert space $H$ and Lipschitz continuous functionals
$f \colon H \to \R$. 
We consider restricted Monte Carlo
algorithms, which
may only use random bits instead of random numbers.
We determine the asymptotics (in some cases 
sharp up to multiplicative constants, in the other cases sharp up 
to logarithmic factors) of the corresponding $n$-th minimal error
in terms of 
the decay of the eigenvalues of the covariance operator of $X$.
It turns out that,
within the margins from above,
restricted Monte Carlo algorithms are not inferior
to arbitrary Monte Carlo algorithms,
and suitable random bit
multilevel algorithms 
are optimal.
The analysis of this problem 
leads to a variant of the quantization problem, namely,
the optimal approximation of probability measures on $H$ 
by uniform distributions supported by 
a given, finite number of
points.
We determine the asymptotics (up to multiplicative constants)
of the error of the best approximation
for the one-dimensional standard normal distribution,
for Gaussian measures as above,
and for scalar autonomous SDEs.
\end{abstract}

\keywords{%
Gaussian measures on Hilbert spaces, integration, approximation
of probability measures, quantization, random bits,
multilevel Monte Carlo algorithms, stochastic differential
equations}

\subjclass[2010]{60G15, 60H35, 60H10, 65D30, 65C05}

\maketitle

\section{Introduction}

We study the approximation of expectations $\E(f(X))$, where $X$ is a 
random element that takes values in a separable Hilbert space $H$
and where $f \colon H \to \R$ is Lipschitz continuous. We consider 
randomized (Monte Carlo) algorithms
that are only allowed to use random bits instead of 
random numbers. By assumption, all other operations 
(arithmetic operations, evaluations of elementary functions, and oracle 
calls to evaluate $f$) are performed exactly.
Algorithms of this type are called restricted
Monte Carlo algorithms, and the approximation of expectations
by algorithms of this type will be called random bit quadrature.

Let $\mu$ denote the distribution of $X$.
We consider the worst case setting, where randomized
algorithms $A$ are compared according to their maximal
error $e(A,F,\mu)$ and their maximal cost $\cost(A,F)$ on a 
class $F$ of functionals $f$.
For an arbitrary Monte Carlo algorithm or a restricted Monte Carlo 
algorithm $\cost(A,F)$ takes into account, in particular, 
the number of calls of the generator for random numbers or random bits, 
respectively.

A basic question is to what extent restricted Monte Carlo algorithms are 
inferior to arbitrary Monte Carlo algorithms. To answer this
question one has to compare the $n$-th minimal error
\[
e^\res_n(F,\mu)
=\inf\{e(A,F,\mu)\colon 
\text{$A$ restricted Monte Carlo algorithm, 
$\cost(A,F)\leq n$}\}
\]
of restricted Monte Carlo algorithms
with the corresponding quantity $e_n(F,\mu)$ for
arbitrary Monte Carlo algorithms on classes $F$ of functionals 
$f$.
In the case of 
infinite-dimensional spaces $H$, this question is closely related
to three variants of approximation problems for probability
measures, namely, quantization, average Kolmogorov widths, and
random bit approximation. 

In most of the papers on randomized algorithms
for continuous problems, uniformly distributed random numbers 
from $[0,1]$ are assumed to be available.
Restricted Monte Carlo algorithms are studied 
for the classical quadrature problem, where $\mu$ is the
uniform distribution on $[0,1]^d \subseteq H = \R^d$, 
in, e.g., \cite{GYW06,HNP04,N85,N88,N01,TW92,YH08}.

In the present paper, we are interested in zero mean Gaussian random 
elements $X$ with values in a separable Hilbert space $H$ and with a 
distribution $\mu$ with infinite-dimensional support, and in
the class $F = \Lip_1$ of all Lipschitz
continuous functionals $f \colon H \to \R$ with Lipschitz constant at 
most one. 

The Karhunen-Lo\`eve expansion of $X$ may be written as
\[
X = \sum_{i=1}^\infty \lambda_i^{1/2} \cdot Y_i \cdot e_i
\]
with convergence, e.g., in mean-square with respect to the norm of $H$.
Here $e_1, e_2, \dots$ form an orthonormal system in $H$, 
$\lambda_1 \geq \lambda_2 \ldots > 0$ with $\sum_{i=1}^\infty
\lambda_i < \infty$, and
$Y_1,Y_2,\dots$ are independent and standard normally
distributed random variables.
We assume that
\[
\lim_{i\to\infty}
\lambda_i \cdot i^{\beta} \cdot (\ln(i))^{\alpha}
\in \left]0,\infty\right[,
\]
where $\beta > 1$ and $\alpha \in \R$.
The asymptotic behavior of the variances $\lambda_i$ of the random
coefficients of $X$ is known in many cases, see, e.g., \cite{LP04}.
For instance, 
\[
\beta = 2h +1
\]
and 
\[
\alpha= -(d-1)\cdot \beta
\]
for a fractional Brownian sheet $X$ 
with Hurst parameter 
$h \in \left]0,1\right[$ and $H=L_2([0,1]^d)$.
In particular,
$\beta=2$ and $\alpha=0$ for a Brownian motion, as well as for a 
Brownian bridge. 

For functions $f,g \colon M \to \left[0,\infty\right]$
on any set
$M$ we write $f(m) \preceq g(m)$ if
there exists a constant $c>0$ such that $f(m) \leq c \cdot g(m)$
for every $m \in M$. Moreover, 
$f(m) \succeq  g(m)$ means $g(m) \preceq f(m)$ and 
$f(m) \asymp  g(m)$ means $f(m) \preceq g(m)$ and 
$g(m) \preceq f(m)$.
In order to mention the set $M$ explicitly, we sometimes
say that the corresponding relation holds uniformly in $m \in M$.

We show that suitable random bit multilevel Monte Carlo algorithms
yield the upper bound
\[
e^\res_n(\Lip_1,\mu)
\preceq
n^{-\min(1/2,(\beta-1)/2)}
\cdot
\begin{cases}
1,
&\text{if }\beta>2,\\
(\ln(n))^{\max(0,1-\alpha/2)},
&\text{if }\beta=2\ \wedge \alpha\neq 2,\\
\ln(\ln(n)),
&\text{if }\beta=2\ \wedge \alpha= 2,\\
(\ln(n))^{-\alpha/2},
&\text{if }\beta<2,
\end{cases}
\]
for the $n$-th minimal error of restricted Monte Carlo algorithms,
see Theorem \ref{thmmlmc} and Corollary \ref{corquad}.
See \cite{G15} for a recent survey of multilevel algorithms.

Upper and lower bounds for the $n$-th minimal
error of arbitrary Monte Carlo algorithms have been established in
\cite{CDMGR09} in a Banach space setting.
Combining the upper bound from the present paper
with the lower bound from
\cite[Thm.~10]{CDMGR09} we obtain the following results, see
Corollary \ref{corquad}.
For $\beta < 2$
\[
e^\res_n(\Lip_1,\mu) \asymp e_n(\Lip_1,\mu) \asymp
n^{-(\beta-1)/2} \cdot (\ln(n))^{-\alpha/2}
\]
and, in particular, there is
no superiority of Monte Carlo algorithms over
restricted Monte Carlo algorithms in this case. 
For $\beta = 2$, a superiority may at most be present on the level of
logarithmic factors, since
\[
\frac{e^\res_n(\Lip_1,\mu)}{e_n(\Lip_1,\mu)}
\preceq
\begin{cases}
\ln(n)^{\max(1,\alpha/2)}, & \text{if $\alpha\neq 2$,}\\
\ln(n) \cdot \ln(\ln(n)), & \text{if $\alpha=2$.}\\
\end{cases}
\]
For $\beta > 2$ we may only conclude that
\[
\liminf_{n \to \infty}
\frac{e^\res_n(\Lip_1,\mu)}{e_n(\Lip_1,\mu)}
\preceq
(\ln(n))^{(1+\beta)/2} \cdot (\ln(\ln(n)))^{\alpha/2}.
\]
Note that for many infinite-dimensional quadrature problems the asymptotic 
behavior of $n$-th minimal errors is only known up to logarithmic factors.
Except for the case $\beta = 2$ and $\alpha<1$,
the upper bound from the present paper slightly
improves the respective bound from \cite{CDMGR09}, which holds
true in a Banach space setting.

In \cite{HNP04}, random bit quadrature with respect to 
the uniform distribution $\mu$ on $[0,1]^d$
and Sobolev and H\"older classes $F$ of functions on
$[0,1]^d$ are considered. The $n$-th minimal 
errors of unrestricted and of restricted Monte Carlo algorithms
turn out to be of the same order, and a very small number of
$O((2+d) \cdot \log n)$ random bits suffice to achieve
asymptotic optimality. 
The proofs of these results are based
on a reduction of the quadrature problem to a summation problem
and on a discrete variant of Bakhvalov's trick.
See \cite{NP04} for a related approach to integral equations.
Anisotropic function classes are considered in \cite{GYW06,YH08}.

For the Gaussian measures $\mu$ on infinite-dimensional spaces we do not
know whether the number of random bits that are needed to achieve
the upper bound for $e^\res_n(\Lip_1,\mu)$ (or asymptotic
optimality) is negligible, compared to $n$.
In our construction of a multilevel algorithm that yields
the upper bound for $e^\res_n(\Lip_1,\mu)$ the number of
random bits is of the order $n$.

The analysis of random bit quadrature problems leads to
the following variant of the quantization problem for probability
measures, namely, the optimal approximation of probability 
measures $\mu$ by uniform distributions $\nu$ on $2^p$ points.
Since $p$ random bits suffice to sample any such $\nu$, we use the
term random bit approximation of probability measures
to denote this new type of approximation problem. 
Let $d$ denote the Wasserstein distance of order two on
the set $\M(H)$ of all Borel probability measures on $H$, and let
$\U(H,p) \subseteq \M(H)$ denote the set of all uniform distributions on 
$H$ with support of size $2^p$. 
Given $\mu \in \M(H)$ 
we study the distance
\[
\rbit(\mu,p) = \inf\{ d(\mu, \nu) \colon \nu \in \U(H,p)\}
\]
between $\mu$ and $\U(H,p)$.
In the one-dimensional case $H=\R$ this approximation
problem has recently been introduced and thoroughly studied for
Wasserstein distances of any order in \cite{XB17}, and some
of the results from \cite{XB17} are generalized to the Banach space
$\R^d$, equipped with the maximum norm, for any $d \in \N$ in
\cite{C18}.

Random bit approximation is
closely related to quantization, which has been studied
intensively for finite-dimensional and for infinite-dimensional
Banach spaces $H$. More precisely, let
$\F(H,p)$
denote the set of all Borel probability measures on $H$ with support of
size at most $2^p$. Obviously the quantization number
\[
\quant(\mu,p) = \inf\{ d(\mu, \nu) \colon \nu \in \F(H,p)\}
\]
is a lower bound for $\rbit(\mu,p)$, i.e.,
\[
\rbit(\mu,p) \geq \quant(\mu,p)
\]
for every $\mu \in \M(H)$ and every $p \in \N$. 
A partial list of references on quantization of probability
measures includes the monograph \cite{GL00} and the survey
\cite{D09} as well as
\cite{CDMGR09,D03,D07a,D07,DFMS03,DS05,LP02,LP04,LP06}.
We stress that the strong asymptotics of $\quant(\mu,p)$
is studied most of the time in the literature, while we only
consider the weak asymptotics of $\rbit(\mu,p)$. Observe that 
we lose the control about asymptotic constants anyway in the analysis 
of the random bit quadrature problem.

For the one-dimensional standard normal distribution $\mu$ we derive
\[
\rbit(\mu,p) \asymp 2^{-p/2} \cdot p^{-1/2},
\]
see Theorem \ref{p:approx-stnormal-prop}, 
while $\quant(\mu,p) \asymp 2^{-p}$ according to a known
general result for quantization. 
For the Gaussian measures $\mu$ on Hilbert spaces we have 
\[
\rbit(\mu,p) \asymp p^{-(\beta-1)/2} 
\cdot (\ln(p))^{-\alpha/2},
\]
see Theorems \ref{p:Bp-rate} and \ref{n-t1}.
For scalar autonomous SDEs 
we consider the distribution $\mu$ of the solution on $L_2([0,1])$,
and under mild assumptions on the drift and diffusion
coefficients we have
\[
\rbit(\mu,p) \asymp p^{-1/2},
\]
see Theorem \ref{p5}.
In the latter two cases we only have to establish the upper bound 
for $\rbit(\mu,p)$, since the matching lower bound
even holds for $\quant(\mu,p)$, according to known results
for quantization.

In the present paper we employ upper bounds for $\rbit(\mu,p)$
and asymptotically optimal random bit approximations to construct
random bit algorithms for quadrature and to derive upper 
bounds for $e^\res(\Lip_1,\mu)$.
In \cite{CDMGR09} close relations between quantization numbers and
average Kolmogorov widths on the one-hand side, and upper and lower
bounds for (minimal) errors of arbitrary Monte Carlo algorithms
have been established.

This work is partially motivated by
reconfigurable architectures like field programmable gate arrays (FPGAs).
These devices allow users to choose
the precision of each individual operation on a bit level and provide a 
generator for random bits.
In the setting and analysis of the present
paper we take into account the latter fact, while we 
ignore all finite precision issues for arithmetic operations.
We refer to \cite{Betal15a,Betal15b} for the construction and 
for extensive tests of a
finite precision multilevel algorithm for FPGAs with applications
in computational finance. For an error analysis of the Euler scheme
for SDEs in a finite precision arithmetic we refer to \cite{O16}.

This paper is organized as follows. In Section \ref{s2}
we formulate and study the random bit approximation problem
for probability measures. Section \ref{sec3} is devoted to the
analysis of random bit quadrature with respect to Gaussian
measures. In the Appendix we derive some asymptotic properties
of the distribution function and its inverse for the 
standard normal distribution.

\section{Random Bit Approximation of Probability Measures}\label{s2}

\subsection{Definitions and Basic Properties}

Consider the set $\M(V)$ of all Borel probability measures
on a separable Banach space $(V,\|\cdot\|_V)$
with a finite second moment,
equipped with the 
Wasserstein distance $d$ of order two, i.e.,
\[
d(\mu_1,\mu_2) = \inf
\Bigl\{ \bigl( \E \|X_1-X_2\|_V^2 \bigr)^{1/2} \colon 
P_{X_1} = \mu_1,\ P_{X_2} = \mu_2 \Bigr\}
\]
for $\mu_1,\mu_2 \in \M(V)$. Here $X_1$ and $X_2$ are jointly
defined on any probability space
and take values in $V$, and $P_{X_i}$ denotes the
distribution of $X_i$. 

For $p\in\N$ we use $\nu^{(p)}$ to denote the uniform distribution on 
$\{0,1\}^p$, and we define
\begin{align*}
\V(V,p)
=
\{ \nu^{(p)}_f \in \M(V) \colon
f \colon \{0,1\}^p \to V\},
\end{align*}
where $\nu^{(p)}_f$ denotes the distribution of $f$ with respect to 
$\nu^{(p)}$.
Observe that $\V(V,p)$ is the set of all 
probability measures on $V$ with support of size at most $2^p$
and with probability weights being
integer multiples of $1/2^p$.
Clearly $p$ random bits
suffice to sample from any $\nu \in \V(V,p)$.

Given $\mu \in \M(V)$ we study the distance
\[
\rbit(\mu,p) = \inf\{ d(\mu, \nu) \colon \nu \in \V(V,p)\}
\]
between $\mu$ and $\V(V,p)$.
We wish to determine the asymptotic behavior of $\rbit(\mu,p)$
as $p$ tends to infinity
and to construct probability measures $\mu^{(p)} \in \V(V,p)$
such that $d(\mu,\mu^{(p)})$ is close to $\rbit(\mu,p)$.

Specifically, we are interested in separable Hilbert spaces $(H,\|\cdot\|_H)$ and
the cases of $\mu$
being the one-dimensional
standard normal distribution, the distribution of 
a Brownian bridge 
on $H=L_2([0,1])$ or, more generally, of a Gaussian random
element on an infinite-dimensional Hilbert space $H$, and finally
the distribution of the solution of a scalar SDE on $H=L_2([0,1])$.

\begin{rem}\label{r2.1}
Obviously,
\[
\rbit(\mu,p+1)
\leq
\rbit(\mu,p)
\]
for every $\mu \in \M(V)$ and every $p \in \N$.
\end{rem}

\begin{rem}\label{r2.2}
Let
\[
\U(V,p)
=
\{ \nu^{(p)}_f \in \M(V) \colon
f \colon \{0,1\}^p \to V\text{ is injective}\},
\]
which is the set of all uniform distributions on $V$
with support of size $2^p$. Since $\U(V,p) \subseteq \V(V,p)$ 
with a dense embedding with respect to the Wasserstein distance
$d$, we have
\[
\rbit(\mu,p) = \inf\{ d(\mu, \nu) \colon \nu \in \U(V,p)\}
\]
for every $\mu \in \M(V)$ and every $p \in \N$. Consequently,
random bit approximation deals with the optimal
approximation of probability measures by uniform distributions
on $2^p$ points.
\end{rem}

The one-dimensional case $V = \R$ has been thoroughly studied in a 
more general setting in \cite{XB17}, and some of the results in the
latter paper have been generalized in \cite{C18} to the Banach
space $V=\R^d$, equipped with the maximum norm, for any $d \in \N$.
Given $p \in \N$ and probability weights $a_1,\ldots,a_{2^p}$ 
the objective is to minimize the Wasserstein
distance of order $r$ between a Borel probability measure $\mu$ on
$V$ with a finite moment of order $r$ and
$\nu = \sum_{k=1}^{2^p} a_k \cdot \delta_{x_k}$
with Dirac measures $\delta_{x_k}$ at any points $x_k$.
This problem is called best finite constrained approximation
with prescribed weights $a_k$ in \cite{XB17}.

The special case $V=\R$, $r=2$, and $a_k=2^{-p}$ corresponds
to the random bit approximation of $\mu \in \M(\R)$, and
we present key results from \cite{XB17} in this case.
In the sequel, $\Psi^{-1}$ denotes the inverse of the distribution 
function of $\mu$.

\begin{rem}\label{xuberger:bounds}
According to \cite[Rem.~5.6(ii)]{XB17}, the unique best
approximation $\nu \in \V(\R,p)$ of $\mu \in \M(\R)$ with respect to $d$
is determined by the points
\begin{equation}\label{g55}
\phantom{\qquad\quad k=1,\ldots,2^p.}
x^*_k = 2^{p} \cdot 
\int_{(k-1) \cdot 2^{-p}}^{k \cdot 2^{-p}} \Psi^{-1}(t) \, \mathrm{d}t,
\qquad\quad k=1,\ldots,2^p.
\end{equation}

Assume that the measure corresponding
to $\Psi^{-1}$ is absolutely continuous  with respect to the Lebesgue 
measure on $[0,1]$.
In \cite[Thm~5.15]{XB17} a constant $c \in {]0,\infty[} \cup 
\{\infty\}$ is given explicitly such that
\begin{align}\label{eq:xuberger:c}
\lim_{p \to \infty} 2^p \cdot \rbit(\mu,p) = c.
\end{align}
In particular, $\rbit(\mu,p) \succeq 2^{-p}$, and this lower 
bound is sharp if and only if $c< \infty$.
As an elementary example we have $c = (2 \cdot \sqrt{3})^{-1}$ for
$\mu$ being the uniform distribution on the unit interval.

Next, assume that all moments of $\mu$ are finite.
Then we have
\begin{align}\label{eq:xuberger:upper}
\rbit(\mu,p) \preceq (2^p)^{-1/2 + \eps} 
\end{align}
for all $\eps > 0$, see \cite[Thm.~5.20]{XB17}.
\end{rem}

\begin{rem}\label{r2.3}
Random bit approximation is
closely related to quantization, which has been studied
intensively for finite-dimensional and for infinite-dimensional
spaces $V$. More precisely, let
\[
\F(V,p) = \{ \nu \in \M(V) \colon |\supp(\nu)| \leq 2^p \}
\]
denote the set of all probability measures on $V$ with support of
size at most $2^p$. The quantization numbers 
\[
\quant(\mu,p) = \inf\{ d(\mu, \nu) \colon \nu \in \F(V,p)\}
\]
immediately yield lower bounds for $\rbit(\mu,p)$, i.e.,
\[
\rbit(\mu,p) \geq \quant(\mu,p)
\]
for every $\mu \in \M(V)$ and every $p \in \N$. 
A partial list of references on quantization of probability
measures includes the monograph \cite{GL00} and the survey
\cite{D09} as well as
\cite{CDMGR09,D03,D07a,D07,DFMS03,DS05,LP02,LP04,LP06}.

The results from \cite{DSS13}, which deals with
quantization on $V=\R^d$ by
means of empirical measures, immediately yield upper bounds for
$\rbit(\mu,p)$. In particular,
if $d \geq 5$ and if $\mu$ has a finite moment of any
order greater than $2/(1-2/d)$, then $\rbit (\mu,p) \preceq
2^{-p/d}$, see \cite[Thm.\ 1]{DSS13}. This upper bound is sharp
in many cases, since $\quant(\mu,p) \succeq 2^{-p/d}$ under mild 
assumptions on $\mu$ for every $d \in \N$, see \cite{D09,GL00} for 
details.

We stress that the strong asymptotics of $\quant(\mu,p)$
is studied most of the time in the literature, while we only
consider the weak asymptotics of $\rbit(\mu,p)$. Observe that 
we lose the control about asymptotic constants anyway in the analysis 
of the random bit quadrature problem.
\end{rem}

\begin{rem}\label{r2.4}
Let $(W,\|\cdot\|_W)$ denote another separable Banach space.
For the proof of upper bounds for $\rbit(\mu,p)$ we may use the 
following simple observation. Let $f\colon W\to V$ be measurable
and $\mu \in \V(W,p)$, then $\mu_f\in \V(V,p)$.
\end{rem}

\subsection{Approximation of the Standard Normal Distribution}\label{sec2.2}

We first fix some notations. For $p \in \N$ let
\begin{align*}
D^{(p)} &= \Bigl\{\sum_{i=1}^p b_i \cdot 2^{-i} + 2^{-(p+1)} \colon 
\text{$b_i \in \{0,1\}$ for $i=1,\dots,p$}\Bigr\}\\
&= \{ k \cdot 2^{-p} - 2^{-(p+1)} \colon k = 1,\dots, 2^p\}
\end{align*}
denote the set of dyadic numbers from ${[0,1[}$ with $p$ bits, 
shifted by $2^{-(p+1)}$, so that $D^{(p)}$ is symmetric with
respect to $1/2$. 
Furthermore, we define the truncation operator $T^{(p)}$ via
\[
T^{(p)} \colon {[0,1[} \to D^{(p)}, 
x \mapsto \frac{\lfloor 2^p \, x \rfloor}{2^p} + 2^{-(p+1)},
\]
i.e., the application of $T^{(p)}$ means rounding
to a nearest element from $D^{(p)}$.

Let $Y$ be a standard normally distributed random variable and let 
$\Phi$ denote the corresponding distribution function. Observe that
$U=\Phi(Y)$ is uniformly distributed on $[0,1]$, so that $T^{(p)}(U)$
is uniformly distributed on $D^{(p)}$. 
The distribution of
\begin{equation}\label{eq:stnormal-approx}
Y^{(p)} = \Phi^{-1} \circ T^{(p)} \circ \Phi (Y) 
\end{equation}
therefore belongs to $\U(\R,p)$.

\begin{theo}\label{p:approx-stnormal-prop} 
Let $\mu$ denote the standard normal distribution. Then
we have
\begin{align}\label{th1eq1}
\rbit(\mu,p)
\asymp
2^{-p/2} \cdot p^{-1/2}.
\end{align}
Furthermore, let $Y^{(p)}$ as in \eqref{eq:stnormal-approx}. Then
\begin{align}\label{th1eq2}
\left(\E|Y - Y^{(p)}|^2\right)^{1/2}
\asymp
\rbit(\mu,p).
\end{align}
Moreover,
\begin{align}\label{th1eq3}
\E\bigl(Y^{(p)}\bigr) = 0
\end{align}
and
\begin{align}\label{th1eq4}
\sup_{p \in \N} \E\bigl|Y^{(p)}\bigr|^r < \infty
\end{align}
for all $r \geq 1$. 
\end{theo}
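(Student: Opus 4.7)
The plan is to establish (\ref{th1eq3}) by symmetry, then prove matching upper and lower bounds of order $2^{-p/2}p^{-1/2}$ for $(\E|Y-Y^{(p)}|^2)^{1/2}$ and for $\rbit(\mu,p)$ respectively (which together yield (\ref{th1eq1}) and (\ref{th1eq2})), and finally handle (\ref{th1eq4}) by a direct monotonicity estimate. For (\ref{th1eq3}) note that $D^{(p)}$ is invariant under $t\mapsto 1-t$, so $T^{(p)}(1-U)$ and $1-T^{(p)}(U)$ have the same law; combined with $\Phi^{-1}(1-v)=-\Phi^{-1}(v)$, this shows that $-Y^{(p)}$ and $Y^{(p)}$ have the same distribution, hence $\E(Y^{(p)})=0$.

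For the upper bound write $\E|Y-Y^{(p)}|^2=\sum_{k=1}^{2^p}\int_{I_k}(\Phi^{-1}(u)-\Phi^{-1}(t_k))^2\,du$ with $I_k=[(k-1)2^{-p},k\cdot 2^{-p}]$ and $t_k=(k-\tfrac12)2^{-p}$, and split into the middle indices $2\le k\le 2^p-1$ and the tail indices $k=1,2^p$. On the middle, Cauchy--Schwarz applied to $\Phi^{-1}(u)-\Phi^{-1}(t_k)=\int_{t_k}^u(\Phi^{-1})'(s)\,ds$ together with $(\Phi^{-1})'(t)=1/\phi(\Phi^{-1}(t))$ gives
\[
\sum_{k=2}^{2^p-1}\int_{I_k}(\Phi^{-1}(u)-\Phi^{-1}(t_k))^2\,du \;\preceq\; 2^{-2p}\int_{2^{-p}}^{1-2^{-p}}\frac{dt}{\phi(\Phi^{-1}(t))^2},
\]
and the Mills-ratio asymptotic $\phi(\Phi^{-1}(1-s))\sim s\sqrt{2\ln(1/s)}$ from the Appendix yields $\int_{2^{-p}}^{1-2^{-p}}\cdots\asymp 2^p/p$, so the middle contribution is $\preceq 2^{-p}/p$. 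For the tail $I_{2^p}$ (the case $I_1$ is symmetric), the substitution $u=1-2^{-p}V$ with $V$ uniform on $[0,1]$ reduces the integral to $2^{-p}\,\E[(W-c_p)^2]$ with $W=\Phi^{-1}(1-2^{-p}V)$ and $c_p=\Phi^{-1}(1-2^{-(p+1)})$; using the Appendix expansion $\Phi^{-1}(1-s)=\sqrt{-2\ln s}+O(\ln\ln(1/s)/\sqrt{\ln(1/s)})$ and Taylor-expanding $\sqrt{2p\ln 2-2\ln V}$ in the small parameter $1/p$, I verify that $\E[(W-c_p)^2]\asymp 1/p$, so the tail contribution is also $\asymp 2^{-p}/p$. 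Altogether $(\E|Y-Y^{(p)}|^2)^{1/2}\preceq 2^{-p/2}p^{-1/2}$; since $(Y,Y^{(p)})$ is the monotone coupling of $\mu$ and the law of $Y^{(p)}\in\V(\R,p)$, this $L^2$-norm in fact equals $d(\mu,\text{law of }Y^{(p)})$, so a fortiori $\rbit(\mu,p)\preceq 2^{-p/2}p^{-1/2}$.

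The matching lower bound uses the Xu--Berger characterization from Remark \ref{xuberger:bounds}: the optimal centers are $x_k^*=2^p\int_{I_k}\Phi^{-1}(u)\,du$ and $\rbit(\mu,p)^2=\sum_k\int_{I_k}(\Phi^{-1}(u)-x_k^*)^2\,du \ge 2^{-p}\Var(\Phi^{-1}(U)\mid U\in I_{2^p})$, and the same Taylor expansion of $\sqrt{2p\ln 2-2\ln V}$ in $1/p$ (now around its own mean instead of $c_p$) shows this conditional variance equals $1/(2p\ln 2)+O(1/p^2)$, hence $\rbit(\mu,p)\succeq 2^{-p/2}p^{-1/2}$. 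For (\ref{th1eq4}) I use $\E|Y^{(p)}|^r=\sum_{k=1}^{2^p}2^{-p}|\Phi^{-1}(t_k)|^r$: by symmetry it suffices to treat $k>2^{p-1}$, and for such $k<2^p$ monotonicity of $\Phi^{-1}$ on $I_k$ gives $2^{-(p+1)}|\Phi^{-1}(t_k)|^r\le\int_{t_k}^{k\cdot 2^{-p}}|\Phi^{-1}(u)|^r\,du$, so summation yields at most $2\int_{1/2}^1|\Phi^{-1}(u)|^r\,du=\E|Y|^r$; the extremal term $k=2^p$ contributes $2^{-p}|\Phi^{-1}(1-2^{-(p+1)})|^r\preceq 2^{-p}\,p^{r/2}\to 0$, proving $\sup_p\E|Y^{(p)}|^r<\infty$.

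The main obstacle is the sharp tail analysis: both $\E[W^2\mid U\in I_{2^p}]$ and $(\E[W\mid U\in I_{2^p}])^2$ grow like $2p\ln 2$, and isolating the $\Theta(1/p)$ gap that delivers the conditional variance (and the analogous quantity $\E[(W-c_p)^2]$) requires pushing the Appendix asymptotic for $\Phi^{-1}(1-s)$ to second order, i.e., keeping the $O(\ln V/p)$ and $O(\ln^2 V/p^2)$ terms in the expansion of $\sqrt{1-\ln V/(p\ln 2)}$ about $V=1$. The same cancellation drives both the upper and lower bounds, and getting the logarithmic factor correctly depends on this second-order expansion rather than the mere leading-order form of $\Phi^{-1}$.
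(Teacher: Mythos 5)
Your proposal is correct in substance but reaches the lower bound by a genuinely different route. For the upper bound you use essentially the paper's decomposition (middle cells controlled via $\bigl((\Phi^{-1})'\bigr)^2=1/\varphi^2(\Phi^{-1})$ and the integral $\int^{1-2^{-p}}\varphi^{-2}(\Phi^{-1}(u))\,\mathrm{d}u\asymp 2^p/p$; extreme cells treated separately), except that the paper handles the extreme cells through the explicit functions $g$ and $h$ and L'H\^{o}pital-type lemmas in the Appendix, whereas you substitute $u=1-2^{-p}V$ and Taylor-expand $\Phi^{-1}(1-s)$. For the lower bound the paper gives a direct minimax coupling argument: for an arbitrary $\widehat{Y}^{(p)}$ with law in $\V(\R,p)$ it distinguishes whether the essential supremum $\widehat{x}$ exceeds $\Phi^{-1}(1-2^{-(p+1)})$, using $g$ in the first case and the atom of mass at least $2^{-p}$ at $\widehat{x}$ together with the increment bound of Lemma~\ref{a:Phi-mean-val} in the second. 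You instead invoke the one-dimensional quantile representation of $W_2$ (equivalently, optimality of the monotone coupling, consistent with the Xu--Berger characterization in Remark~\ref{xuberger:bounds}) to write $\rbit(\mu,p)^2=\sum_k\int_{I_k}(\Phi^{-1}(u)-x_k^*)^2\,\mathrm{d}u\ge 2^{-p}\Var(\Phi^{-1}(U)\mid U\in I_{2^p})$ and bound the conditional variance of the last cell from below. Both are valid; the paper's argument is self-contained and does not use the quantile representation, while yours is shorter once that representation is granted. Your statements for \eqref{th1eq3} and \eqref{th1eq4} are fine (the paper gets \eqref{th1eq4} slightly more slickly via convexity of $(\Phi^{-1})^r$ and the midpoint rule, yielding $\E|Y^{(p)}|^r\le\E|Y|^r$ outright). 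One caveat on rigor: you correctly identify that the first-order asymptotic $\Phi^{-1}(1-s)=\sqrt{2\ln(1/s)}+O(\ln\ln(1/s)/\sqrt{\ln(1/s)})$ is insufficient, since its error term $O(\ln p/\sqrt{p})$ exceeds the target standard deviation $p^{-1/2}$; to close this you should either carry the expansion to the next order with a controlled remainder or, more robustly, work with the derivative $\tfrac{d}{dV}\Phi^{-1}(1-2^{-p}V)=-2^{-p}/\varphi(\Phi^{-1}(1-2^{-p}V))$ and Lemma~\ref{a:l:varphi-Phi} (for the variance lower bound, the spread $\Phi^{-1}(1-2^{-p}/4)-\Phi^{-1}(1-3\cdot 2^{-p}/4)\succeq p^{-1/2}$ from Lemma~\ref{a:Phi-mean-val} already suffices), and you must also check that the contribution from $V$ near $0$, where $\Phi^{-1}(1-2^{-p}V)$ blows up, is negligible.
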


\begin{proof}
By definition,
\begin{align}\label{th1eq5}
\rbit(\mu,p) \leq 
\bigl(\E|Y - Y^{(p)}|^2\bigr)^{1/2}.
\end{align}
Hence we show that
\begin{equation}\label{eq:upper}
\bigl(\E|Y - Y^{(p)}|^2\bigr)^{1/2} \preceq 2^{-p/2} \cdot p^{-1/2}.
\end{equation}
Let $z_k = k \cdot 2^{-p}$ for $k = 2^{p-1},\dots, 2^p$, and let 
$\varphi$ denote the density of the standard normal distribution. We have 
\[
\E|Y - Y^{(p)}|^2
=
\E\bigl|\Phi^{-1}(U) - \Phi^{-1}\circ T^{(p)}(U)\bigr|^2 
=
2 \cdot  
\sum_{k=2^{p-1}+1}^{2^p} A_k,
\]
where
\[
A_k
=
\int_{\left[z_{k-1},z_{k} \right[}  
\bigl| \Phi^{-1} (u) - x_k \bigr|^2 \, \mathrm{d}u
\]
with $x_k = \Phi^{-1}(z_k - 2^{-(p+1)} )$.

Consider the case $2^{p-1}+1 \leq k \leq 2^p-2$.
Observe that $\varphi \circ \Phi^{-1}$ is monotonically
decreasing on
$\left[1/2,1\right[$.
Using 
\[
\bigl| \Phi^{-1} (u) - x_k  \bigr|
\leq
\frac{|u-(z_k-2^{-(p+1)})|}{\varphi(\Phi^{-1}(z_{k}))} 
\]
for $u \in \left[z_{k-1},z_k\right[$, we obtain
\[
A_k
\leq 
\frac{2^{-3p}}{12 \, \varphi^2(\Phi^{-1}(z_k))}.
\]
Consequently, 
\begin{align*}
2^p\, p \cdot \sum_{k=2^{p-1}+1}^{2^p-2} A_k
&\leq
\frac{2^{-2p}\, p}{12} \cdot 
\sum_{k=2^{p-1}+1}^{2^p-2} \frac{1}{\varphi^2(\Phi^{-1}(z_k))} 
\leq
\frac{2^{-p}\, p}{12} \cdot 
\int_{1/2}^{1-2^{-p}} \frac{1}{\varphi^2(\Phi^{-1}(u))}\, \mathrm{d}u \\
&=
\frac{2^{-p}\, p \cdot \sqrt{2 \pi}}{12} \cdot 
h(\Phi^{-1}(1-2^{-p})), 
\end{align*}
where
\begin{equation}\label{eq:h}
h(a) = 
\int_{0}^{a} \exp(x^2/2) \, \mathrm{d}x
\end{equation}
for $a > 0$. By Lemma~\ref{a:l:h} from the Appendix we obtain
\[
\limsup_{p \to \infty} 2^p\, p \cdot \sum_{k=2^{p-1}+1}^{2^p-2} A_k
\leq
\frac{1}{12 \, \ln 4}.
\]

For every $2^{p-1}+1 \leq k \leq 2^p$,
\[
A_k \leq 2 \cdot \int_{\left[z_{k-1}+2^{-(p+1)},z_{k}\right[}
\bigl| \Phi^{-1} (u) - x_k\bigr|^2 \, \mathrm{d}u
= 2 \cdot
\int_{\left[x_k,\Phi^{-1}(z_k) \right[} 
(x - x_k)^2 \, \varphi(x) \, \mathrm{d}x.
\]
Furthermore, this upper bound for $A_k$ is monotonically increasing
in $k$, since $\Phi^{-1}$ is convex on $\left[1/2,1\right[$. 
Therefore we consider the case $k=2^p$. Put
\begin{equation}\label{eq:g}
g(a)
= 
\int_{\left[a,\infty \right[} (x - a)^2 \, \varphi(x) \, \mathrm{d} x
\end{equation}
for $a > 0$.
By Lemma~\ref{a:l:g} from the Appendix we obtain
\[
\limsup_{p \to \infty} 2^p\, p 
\cdot \E | Y - Y^{(p)}|^2 
\leq
\frac{49}{6 \, \ln 4},
\]
which completes the proof of the upper bound \eqref{eq:upper}.

Next we show
\begin{equation}\label{eq:lower}
\rbit(\mu,p) \succeq 2^{-p/2} \cdot p^{-1/2}.
\end{equation}
To this end we consider a random variable
$\widehat{Y}^{(p)}$ with distribution in 
$\V(\mathbb{R},p)$
and defined on the same space as $Y$.
Let $\widehat{x}$ denote the  
essential supremum of $\widehat{Y}^{(p)}$.
We are going to consider two cases, at first we assume
\begin{align*}
\widehat{x} \leq \Phi^{-1}(1 - 2^{-(p+1)}).
\end{align*}
Due to the monotonicity of $\Phi^{-1}$ we have
\begin{align*}
\E \bigl(Y - \widehat{Y}^{(p)}\bigr)^2
&\geq
\E \Bigl(\bigl(\Phi^{-1}(U) - \widehat{Y}^{(p)}\bigr)^2 \cdot 
1_{\{U \geq 1 - 2^{-(p+1)}\}}\Bigr)\\
&\geq \E\Bigl(\bigl(\Phi^{-1}(U) - \Phi^{-1}(1 - 2^{-(p+1)})\bigr)^2 
\cdot 1_{\{U \geq 1 - 2^{-(p+1)}\}}\Bigr)\\
&=
g\bigl(\Phi^{-1}(1-2^{-(p+1)})\bigr).
\end{align*}
Lemma~\ref{a:l:g} from the Appendix yields
$g\bigl(\Phi^{-1}(1-2^{-(p+1)})\bigr) \succeq 2^{-p} \cdot p^{-1}$.
For the second case, i.e.,
\begin{align*}
\widehat{x} > \Phi^{-1}(1 - 2^{-(p+1)})
\end{align*}
we use Lemma~\ref{a:Phi-mean-val} from the Appendix with 
$a = 1 - 3 \cdot 2^{-(p+2)}$ and $b = 1 - 2^{-(p+1)}$ to conclude that
\[
\Phi^{-1}(b) - \Phi^{-1}(a) \succeq p^{-1/2}.
\]
Together with the monotonicity of $\Phi^{-1}$ this leads to
\begin{align*}
\E \bigl(Y - \widehat{Y}^{(p)}\bigr)^2
&\geq
\E \Bigl(\bigl(\Phi^{-1}(U) - \widehat{x}\bigr)^2 \cdot 
1_{\{\widehat{Y}^{(p)} = \widehat{x}\}} \cdot 1_{\{U \leq a\}}\Bigr)\\
&\geq
\E \Bigl(\bigl(\Phi^{-1}(a) - \Phi^{-1}(b)\bigr)^2 \cdot 
1_{\{\widehat{Y}^{(p)} = \widehat{x}\}} \cdot 1_{\{U \leq a\}}\Bigr)\\
&\succeq
p^{-1} \cdot P\bigl(\{\widehat{Y}^{(p)} = \widehat{x}\} \cap \{U \leq
a\}\bigr)\\
&\geq
p^{-1} \cdot \bigl(P(\{\widehat{Y}^{(p)} = \widehat{x}\}) - 
P(\{U > a\})\bigr)\\
&
\geq p^{-1} \cdot \bigl(2^{-p} - P(\{U > a\})\bigr) \\
&=
p^{-1} \cdot 2^{-(p+2)}.
\end{align*}
This completes the proof of \eqref{eq:lower}.
Combing \eqref{th1eq5}, \eqref{eq:upper}, and \eqref{eq:lower}
yields
\eqref{th1eq1}
and
\eqref{th1eq2}.

For the proof of \eqref{th1eq3} we observe that $T^{(p)}(U)$ is 
uniformly distributed on $D^{(p)}$. Since 
$\Phi^{-1}(1-x) = - \Phi^{-1}(x)$ for 
$x \in ]0,1[$, we get \eqref{th1eq3} for symmetry reasons.

Observe that
$x \mapsto (\Phi^{-1}(x))^r$ is a convex function
on ${[1/2,1[}$ for $r \geq 1$ with integral equal to
$\tfrac{1}{2} \E |Y|^r$, and applying a midpoint rule
to this function we get $\tfrac{1}{2} \E|Y^{(p)}|^r$.
Hence we have
\[
\sup_{p \in \N} \E\bigl|Y^{(p)}\bigr|^r \leq \E|Y|^r < \infty,
\]
which implies \eqref{th1eq4}.
\end{proof}

\begin{rem}
We compare Theorem~\ref{p:approx-stnormal-prop} with the results
from \cite{XB17}, as discussed in Remark~\ref{xuberger:bounds},
for the standard normal distribution $\mu$.

Note that the measure corresponding to $\Phi^{-1}$ is absolutely 
continuous with respect to the Lebesgue measure on $[0,1]$. 
Theorem~\ref{p:approx-stnormal-prop} implies that 
$c = \infty$ in \eqref{eq:xuberger:c}. Moreover, 
the order of convergence of $\rbit(\mu,p)$ is only
slightly better than the upper bound \eqref{eq:xuberger:upper},
which holds for every $\mu \in \M(\R)$ with finite moments
of any order.

The optimal selection of support points $x_k^*$
is given by the local averages
of $\Phi^{-1}$ based on a uniform partition of $[0,1]$, see
\eqref{g55} with $\Psi^{-1} = \Phi^{-1}$. In 
Theorem~\ref{p:approx-stnormal-prop} we consider a slightly simpler
construction, which still yields the same order of convergence
of the Wasserstein distance,
as we employ the values $x_k$ of $\Phi^{-1}$ at the
midpoints for this partition. Both of these point sets
are symmetric with respect to $1/2$, and
$x_k < x_k^* < x_{k+1}$ for $k=2^{p-1}+1,\dots,2^{p}-1$.
\end{rem}

\begin{rem}\label{r1}
We compare Theorem \ref{p:approx-stnormal-prop} 
with known results
for the quantization problem, see, e.g., \cite[Thm. 6.2]{GL00}.
First of all,
\[
\quant(\mu, p)
\asymp
2^{-p}
\]
for the quantization of the standard normal distribution
$\mu$, so that the quantization error converges to zero much faster
than the corresponding quantity for random bit approximation.
On the other hand, for the uniform distribution on $[0,1]$
both quantities are of the same order $2^{-p}$.
\end{rem}

\subsection{Approximation of the Distribution of a Brownian 
Bridge}\label{s3}

Let $(s_i)_{i \in \mathbb{N}}$ be the sequence of Schauder 
functions given by
\[
\phantom{\quad\qquad t \in [0,1],}
s_i(t) = \int_0^t h_i(u) \, \mathrm{d} u, 
\quad\qquad t \in [0,1],
\]
with
\[
h_{2^m+k-1} = 2^{m/2} \cdot
\left( 1_{I_{2^m+k}} - 1_{J_{2^m+k}} \right)
\]
for $m \in \N_0$ and $k=1,\dots,2^m$,
where
\[
I_{2^m+k}
=
\left[
(k-1) / 2^m,
(k-1/2) / 2^m
\right[
\]
and
\[
J_{2^m+k}
=
\left[
(k-1/2) / 2^m,
k / 2^m
\right[.
\]

Let $B$ denote a standard Brownian bridge on $[0,1]$, which is
henceforth considered as a centered Gaussian random element that takes
values in $H=L_2=L_2([0,1])$.
The L\'evy-Ciesielski (or Brownian bridge) representation 
of $B$ states that 
\[
B = \sum_{i=1}^\infty Y_i \cdot s_i
\]
with convergence, e.g., in mean-square with respect to the $L_2$-norm.
Here $Y_1,Y_2,\dots$ is an independent sequence of standard
normally distributed random variables.

We define
\[
B^{(\ell)}
= \sum_{i=1}^{2^\ell-1} Y_i \cdot s_i
\]
for $\ell \in \N$, i.e., $B^{(\ell)}$ is the piecewise linear interpolation of $B$
at the points $k \cdot 2^{-\ell}$ with $k=0,\dots,2^\ell$.
The following result is well known,
see, e.g., \cite[Sec.\ II.3]{R00} for references and remarks.

\begin{lemma}\label{p:BB-approx-rate}
We have
\[
\left(\E\bigl\|B - B^{(\ell)}\bigr\|_{L_2}^2\right)^{1/2} 
\asymp 2^{-\ell/2}.
\]
\end{lemma}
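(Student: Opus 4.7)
My plan is to compute $\E\bigl\|B - B^{(\ell)}\bigr\|_{L_2}^2$ exactly by exploiting the series representation stated in the excerpt. Writing
\[
B - B^{(\ell)} = \sum_{i=2^\ell}^\infty Y_i \cdot s_i,
\]
I would apply Fubini's theorem together with independence and unit variance of the $Y_i$ to obtain
\[
\E\bigl\|B - B^{(\ell)}\bigr\|_{L_2}^2
=
\int_0^1 \sum_{i=2^\ell}^\infty s_i(t)^2 \, \mathrm{d}t
=
\sum_{i=2^\ell}^\infty \|s_i\|_{L_2}^2.
\]
Note that this step does \emph{not} require $L_2$-orthogonality of the Schauder functions across levels (which in fact fails); only independence of the $Y_i$ is used.

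Next I would compute $\|s_{2^m+k-1}\|_{L_2}^2$ explicitly. From the definition of $h_{2^m+k-1}$ as a rescaled Haar function, $s_{2^m+k-1}$ is a tent function supported on $[(k-1)/2^m,\,k/2^m]$, linear on each half, with peak value $2^{-m/2-1}$ at the midpoint. An elementary integration gives $\|s_{2^m+k-1}\|_{L_2}^2 = \tfrac{1}{12}\cdot 4^{-m}$, independently of $k$. Summing over $k=1,\dots,2^m$ yields $\tfrac{1}{12}\cdot 2^{-m}$ for level $m$, and summing the resulting geometric series in $m \geq \ell$ gives
\[
\E\bigl\|B - B^{(\ell)}\bigr\|_{L_2}^2
=
\sum_{m=\ell}^\infty \tfrac{1}{12}\cdot 2^{-m}
=
\tfrac{1}{6}\cdot 2^{-\ell}.
\]
Taking square roots then establishes the claimed $\asymp$ relation, in fact with matching constants $1/\sqrt{6}$ on both sides.

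I do not expect any genuine obstacle; the only verification needed is the shape and peak height of the Schauder tent functions from their integral definition, which is routine. A conceptually equivalent alternative would be to observe that on each subinterval $[k\cdot 2^{-\ell}, (k+1)\cdot 2^{-\ell}]$ the process $B - B^{(\ell)}$ is, in distribution, a Brownian bridge of length $2^{-\ell}$ (because $B^{(\ell)}$ interpolates $B$ linearly at the dyadic nodes), then integrate the bridge variance $(t-a)(b-t)/(b-a)$ over the interval and sum over the $2^\ell$ subintervals; this produces the same constant $\tfrac{1}{6}\cdot 2^{-\ell}$ and would serve equally well.
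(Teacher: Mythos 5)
The paper does not actually prove this lemma; it simply records it as well known and points to \cite{R00} for references. Your proposal therefore supplies a genuine proof where the paper gives none, and it is correct. The key identity
\[
\E\bigl\|B-B^{(\ell)}\bigr\|_{L_2}^2=\sum_{i=2^\ell}^\infty \|s_i\|_{L_2}^2
\]
is justified exactly as you say: by Tonelli and $\E(Y_iY_j)=\delta_{ij}$, pointwise in $t$, so the (genuine) failure of $L_2([0,1])$-orthogonality of the Schauder functions across levels is irrelevant --- it is good that you flagged this, since it is the one place a careless argument would go wrong. Your computation of the tent functions checks out: $s_{2^m+k-1}$ has slope $\pm 2^{m/2}$ on the two halves of $[(k-1)2^{-m},k2^{-m}]$, hence peak $2^{-m/2-1}$ and $\|s_{2^m+k-1}\|_{L_2}^2=\tfrac{1}{12}\cdot 4^{-m}$; summing over $k$ and then over levels $m\ge\ell$ (note that $B^{(\ell)}$ uses precisely the indices $i\le 2^\ell-1$, i.e.\ levels $m\le\ell-1$) gives $\tfrac16\cdot 2^{-\ell}$, which is even sharper than the stated $\asymp$. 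Your alternative argument via independent rescaled Brownian bridges on the dyadic subintervals is the standard probabilistic route and yields the same constant; either version is perfectly adequate here and is also consistent with the estimate $\|s_i\|_{L_2}^2\asymp i^{-2}$ used later in the proof of Lemma~\ref{p:BB-approx-vs-Bp}.
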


For $\ell \in \mathbb{N}$ we consider a vector 
\[
\bp = (p_1,\ldots,p_{2^\ell-1}) \in 
\mathbb{N}^{2^\ell-1}
\]
of bit numbers. 
We define
\[
B^{(\ell,\bp)}
= \sum_{i=1}^{2^\ell-1} Y_i^{(p_i)} \cdot s_i,
\]
where $Y_i^{(p_i)}$ 
is the approximation of $Y_i$ according to 
\eqref{eq:stnormal-approx}.
This approach, which is appropriate for the construction
of multilevel algorithms, see Section \ref{sec3}, 
has been suggested in \cite[p.~320]{G15}.
Note that the distribution of $B^{(\ell,\bp)}$ belongs to
$\U(L_2,|\bp|)$ with
\[
|\bp| = \sum_{i=1}^{2^{\ell}-1} p_i.
\]

\begin{lemma}\label{p:BB-approx-vs-Bp}
We have
\[
\left(\E\bigl\|B^{(\ell)} - B^{(\ell,\bp)}\bigr\|_{L_2}^2\right)^{1/2}
\asymp 
\biggl(\sum_{i=1}^{2^\ell-1} 
2^{-p_i} / p_i \cdot i^{-2}
\biggr)^{1/2}
\]
uniformly in $\ell \in \N$ and $\bp \in \N^{2^\ell-1}$.
\end{lemma}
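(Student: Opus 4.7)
The plan is to exploit the independence of the coefficient errors together with the vanishing of their means to reduce the squared $L_2$ error of $B^{(\ell)} - B^{(\ell,\bp)}$ to a diagonal sum, and then to plug in the pointwise asymptotics from Theorem~\ref{p:approx-stnormal-prop} together with the explicit $L_2$-norms of the Schauder functions.

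First I would write $B^{(\ell)} - B^{(\ell,\bp)} = \sum_{i=1}^{2^\ell-1} (Y_i - Y_i^{(p_i)}) \cdot s_i$ and expand
\[
\E \| B^{(\ell)} - B^{(\ell,\bp)} \|_{L_2}^2
= \sum_{i,j=1}^{2^\ell-1} \E\bigl[(Y_i - Y_i^{(p_i)})(Y_j - Y_j^{(p_j)})\bigr] \cdot \langle s_i, s_j \rangle_{L_2}.
\]
By the construction \eqref{eq:stnormal-approx}, each $Y_i^{(p_i)}$ is a deterministic function of $Y_i$ alone; since $Y_1, Y_2, \dots$ are independent, the random variables $Y_i - Y_i^{(p_i)}$ for distinct $i$ are independent as well. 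Moreover $\E(Y_i)=0$ and, by \eqref{th1eq3}, $\E(Y_i^{(p_i)}) = 0$, so every cross term vanishes and
\[
\E \| B^{(\ell)} - B^{(\ell,\bp)} \|_{L_2}^2
= \sum_{i=1}^{2^\ell-1} \E(Y_i - Y_i^{(p_i)})^2 \cdot \|s_i\|_{L_2}^2.
\]
This is the decisive step: it turns the non-orthogonality of the Schauder system in $L_2$ into a non-issue.

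Next I would estimate the two factors separately. For $i = 2^m+k-1$ with $m \in \N_0$ and $k \in \{1, \dots, 2^m\}$, the function $s_i$ is the standard tent of height $2^{-m/2-1}$ on an interval of length $2^{-m}$, so a direct integration yields $\|s_i\|_{L_2}^2 = 1/(12 \cdot 4^m)$. Since $2^m \leq i < 2^{m+1}$, we obtain $\|s_i\|_{L_2}^2 \asymp i^{-2}$ with absolute constants. From Theorem~\ref{p:approx-stnormal-prop}, combining \eqref{th1eq1} with \eqref{th1eq2}, we have $\E(Y_i - Y_i^{(p_i)})^2 \asymp 2^{-p_i}/p_i$ with constants independent of $p_i \in \N$. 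Substituting these two equivalences into the diagonal sum, and then taking square roots, delivers the claim uniformly in $\ell$ and $\bp$.

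I do not expect any serious obstacle. The single conceptual point is the decoupling of the cross terms, which rests solely on the mean-zero property \eqref{th1eq3} of the random bit approximation together with the independence of the Karhunen-Lo\`eve coefficients; once this diagonalization is in place, the lemma is just the conjunction of Theorem~\ref{p:approx-stnormal-prop} with the elementary computation of $\|s_i\|_{L_2}$.
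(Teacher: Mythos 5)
Your proposal is correct and follows essentially the same route as the paper: the paper also reduces the second moment to a diagonal sum by exploiting that the errors $Y_i - Y_i^{(p_i)}$ are independent and centered (it phrases this as a pointwise variance decomposition in $t$ followed by integration, rather than via the Gram matrix $\langle s_i,s_j\rangle_{L_2}$, but the mechanism is identical), and then invokes Theorem~\ref{p:approx-stnormal-prop} together with $\|s_i\|_{L_2}^2 \asymp i^{-2}$. Your explicit computation $\|s_i\|_{L_2}^2 = 1/(12\cdot 4^m)$ for $2^m \le i < 2^{m+1}$ is a correct justification of that last equivalence.
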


\begin{proof}
Let $\widehat{Y}_i = Y_i^{(p_i)}$.
Use Theorem~\ref{p:approx-stnormal-prop} and $\|s_i\|_{L_2}^2 \asymp 
i^{-2}$ to obtain
\begin{align*}
&\E \bigl\|B^{(\ell,\bp)} - B^{(\ell)}\bigl\|_{L_2}^2 
=
\int_0^1 \E \biggl(\sum_{i=1}^{2^\ell-1}
\bigl(\widehat{Y}_i - Y_i\bigr) \cdot s_i(t)\biggr)^2 \, \mathrm{d}t\\
&\qquad=
\int_0^1 \Var \biggl(\sum_{i=1}^{2^\ell-1}
\bigl(\widehat{Y}_i - Y_i\bigr) \cdot s_i(t)\biggr) \, \mathrm{d}t
=
\int_0^1 \sum_{i=1}^{2^\ell-1}\Var
\bigl(\bigl(\widehat{Y}_i - Y_i\bigr) \cdot s_i(t)\bigr) \, \mathrm{d}t\\
&\qquad=
\sum_{i=1}^{2^\ell-1} \E 
\bigl(\widehat{Y}_i - Y_i\bigr)^2 \cdot \int_0^1 s_i^2(t) \, \mathrm{d}t
\asymp
\sum_{i=1}^{2^\ell-1} 2^{-p_{i}} / p_i \cdot i^{-2}.
\qedhere
\end{align*}
\end{proof}

\begin{theo}\label{p:Bp-rate}
Let $\mu$ be the distribution of a standard Brownian bridge $B$ on $L_2$. 
Then we have
\[
\rbit(\mu,p) \asymp \quant(\mu,p) \asymp p^{-1/2}.
\]
Define $\bp(\ell) \in \N^{2^\ell-1}$ for $\ell \in \N$ by 
\begin{equation}\label{eq:prec-setup}
\phantom{\qquad\quad i =1,\ldots,2^\ell-1.}
p_i(\ell) = 2 \cdot(\ell - \lfloor\log_2 i\rfloor),
\qquad\quad i =1,\ldots,2^\ell-1.
\end{equation}
Then we have
\[
\left(\E \bigl\|B-B^{(\ell,\bp(\ell))}\bigr\|_{L_2}^2\right)^{1/2}
\asymp
\rbit(\mu,|\bp(\ell)|) 
\]
and
\[
|\bp(\ell)| = 2^{\ell+2} -2\ell - 4 \asymp 2^{\ell}.
\]
\end{theo}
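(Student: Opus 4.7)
The plan is to combine a classical quantization lower bound for the Brownian bridge with an explicit upper bound coming from the measure induced by $B^{(\ell,\bp(\ell))}$, and then read off the three claims. For the lower bound, Remark~\ref{r2.3} gives $\rbit(\mu,p)\geq \quant(\mu,p)$, and the asymptotics $\quant(\mu,p)\asymp p^{-1/2}$ for the Brownian bridge (whose covariance has eigenvalues $1/(i\pi)^2$) are standard Gaussian quantization results; see, e.g., \cite{LP02,LP04,GL00}. The closed form for $|\bp(\ell)|$ is a direct calculation: grouping the $2^\ell-1$ indices by dyadic level $m=\lfloor\log_2 i\rfloor\in\{0,\ldots,\ell-1\}$ (each level contains $2^m$ indices with constant value $p_i(\ell)=2(\ell-m)$) gives
\[
|\bp(\ell)|=2\sum_{m=0}^{\ell-1}2^m(\ell-m),
\]
which via the standard identities $\sum_{m=0}^{\ell-1}2^m=2^\ell-1$ and $\sum_{m=0}^{\ell-1}m\cdot 2^m=(\ell-2)\cdot 2^\ell+2$ evaluates to $2^{\ell+2}-2\ell-4\asymp 2^\ell$.

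The main step is the matching upper bound. By the triangle inequality,
\[
\bigl(\E\|B-B^{(\ell,\bp(\ell))}\|_{L_2}^2\bigr)^{1/2}
\leq
\bigl(\E\|B-B^{(\ell)}\|_{L_2}^2\bigr)^{1/2}
+\bigl(\E\|B^{(\ell)}-B^{(\ell,\bp(\ell))}\|_{L_2}^2\bigr)^{1/2},
\]
and Lemma~\ref{p:BB-approx-rate} bounds the first summand by a multiple of $2^{-\ell/2}$. For the second, Lemma~\ref{p:BB-approx-vs-Bp} reduces its square to $\sum_{i=1}^{2^\ell-1}2^{-p_i(\ell)}/p_i(\ell)\cdot i^{-2}$. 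Grouping by level $m$, using $\sum_{i:\lfloor\log_2 i\rfloor=m}i^{-2}\asymp 2^{-m}$ and $p_i(\ell)=2(\ell-m)$, the level-$m$ contribution is $\asymp 2^{-2(\ell-m)}\cdot 2^{-m}/(\ell-m)=2^{m-2\ell}/(\ell-m)$. The substitution $j=\ell-m$ turns the total sum into
\[
2^{-\ell}\sum_{j=1}^{\ell}\frac{2^{-j}}{j}\asymp 2^{-\ell},
\]
since $\sum_{j\geq 1}2^{-j}/j<\infty$. Both summands are therefore $\asymp 2^{-\ell/2}$, and since $|\bp(\ell)|\asymp 2^\ell$ the total error is $\asymp |\bp(\ell)|^{-1/2}$. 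Because the law of $B^{(\ell,\bp(\ell))}$ lies in $\V(L_2,|\bp(\ell)|)$, this yields $\rbit(\mu,|\bp(\ell)|)\preceq |\bp(\ell)|^{-1/2}$ along the subsequence $p=|\bp(\ell)|$.

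To extend to arbitrary $p$ I use the monotonicity in Remark~\ref{r2.1} together with $|\bp(\ell+1)|/|\bp(\ell)|\to 2$: for $|\bp(\ell)|\leq p<|\bp(\ell+1)|$,
\[
\rbit(\mu,p)\leq \rbit(\mu,|\bp(\ell)|)\preceq |\bp(\ell)|^{-1/2}\asymp p^{-1/2},
\]
which together with the lower bound gives $\rbit(\mu,p)\asymp p^{-1/2}$. The middle equivalence $(\E\|B-B^{(\ell,\bp(\ell))}\|_{L_2}^2)^{1/2}\asymp \rbit(\mu,|\bp(\ell)|)$ then follows because both sides are $\asymp |\bp(\ell)|^{-1/2}$, the trivial lower bound $(\E\|B-B^{(\ell,\bp(\ell))}\|_{L_2}^2)^{1/2}\geq \rbit(\mu,|\bp(\ell)|)$ again using that the law of $B^{(\ell,\bp(\ell))}$ belongs to $\V(L_2,|\bp(\ell)|)$. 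The only substantive step is the dyadic resummation; the choice $p_i(\ell)=2(\ell-\lfloor\log_2 i\rfloor)$ is calibrated precisely so that the level-$m$ contributions form a convergent geometric series, balancing the deterministic truncation error $\asymp 2^{-\ell/2}$ against the random-bit error at total cost $|\bp(\ell)|\asymp 2^\ell$.
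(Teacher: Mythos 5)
Your proposal is correct and follows essentially the same route as the paper: the trivial bound $\rbit(\mu,|\bp(\ell)|)\leq(\E\|B-B^{(\ell,\bp(\ell))}\|_{L_2}^2)^{1/2}$, Lemmata~\ref{p:BB-approx-rate} and~\ref{p:BB-approx-vs-Bp} combined with the bound $\sum_{i=1}^{2^\ell-1}2^{-p_i}/p_i\cdot i^{-2}\preceq 2^{-\ell}$ for the upper bound, and the known quantization asymptotics $\quant(\mu,p)\asymp p^{-1/2}$ for the matching lower bound. You merely spell out the dyadic resummation and the closed form for $|\bp(\ell)|$ (which the paper leaves as an induction) and make explicit the monotonicity interpolation to arbitrary $p$, which the paper leaves implicit.
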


\begin{proof}
We write $\bp$ and $p_i$ instead of $\bp(\ell)$ and $p_i(\ell)$,
respectively, to simplify the notation.
By definition, 
\[
\rbit(\mu,|\bp|) \leq 
\left(\E \bigl\|B-B^{(\ell,\bp)}\bigr\|_{L_2}^2\right)^{1/2}. 
\]
Hence we show that
\begin{equation}\label{g98}
\left(\E \bigl\|B-B^{(\ell,\bp)}\bigr\|_{L_2}^2\right)^{1/2}
\preceq |\bp|^{-1/2}.
\end{equation}
Since
\begin{align}\label{eq:precision:necessary}
\sum_{i=1}^{2^\ell-1} 
2^{-p_{i}} / p_i \cdot i^{-2}
\leq
2^{-\ell}
\end{align}
for the specific choice of the bit numbers $p_i$,
Lemmata \ref{p:BB-approx-rate} and \ref{p:BB-approx-vs-Bp}
yield 
\[
\left(\E \bigl\|B-B^{(\ell,\bp)}\bigr\|_{L_2}^2\right)^{1/2} \preceq 
2^{-\ell/2}. 
\]
The explicit formula for $|\bp|$ is easily verified by induction,
and this completes the proof of the asymptotic upper bound \eqref{g98}.
On the other hand,
\[
\quant(\mu,p) \asymp p^{-1/2},
\]
see \cite[p.\ 527]{LP02} and \cite{D03}.
\end{proof}

\begin{rem}
Observe that the Schauder function $s_i$ has a support
of size $2^{- \lfloor \log_2 i \rfloor}$. Therefore
$B^{(\ell,\bp(\ell))}$ involves all Schauder functions with support size
between $1$ and $2^{-(\ell-1)}$, and the number of random bits that
is associated to $s_i$ according to \eqref{eq:prec-setup} only
depends on the size of its support. This number varies linearly
between $2 \ell$ for $s_1$ and $2$ for $s_i$ with 
$i=2^{\ell-1},\ldots, 2^{\ell}-1$.

In our construction the total number $|\bp(\ell)|$ of bits
coincides, up to a multiplicative constant,
with the numbers of terms in $B^{(\ell,\bp(\ell))}$ and in $B^{(\ell)}$. 
The partial sum $B^{(\ell)}$ formally corresponds to 
$p_i(\ell) = \infty$ for
$i=1,\dots,2^\ell-1$, but still the errors of $B^{(\ell)}$
and $B^{\ell,\bp(\ell))}$ are of the same order
$2^{-\ell/2}$, see Lemma \ref{p:BB-approx-rate} and
Theorem \ref{p:Bp-rate}.
\end{rem}

\begin{rem}
The bit numbers given by \eqref{eq:prec-setup} 
depend on $i$ and they approximately 
minimize $|\bp|$, subject to the constraint 
\eqref{eq:precision:necessary}.

For constant bit numbers 
\begin{equation}\label{eq:prec-constant}
p=
p_1 = \ldots = p_{2^\ell-1}
\end{equation}
the following holds true.
For \eqref{eq:precision:necessary} to hold true we must have 
$2^{-p}/p\leq 2^{-\ell}$,
and together with $p\leq 2^{p}$ this yields 
$p \geq \ell/2$. On the other hand $p = 2 \cdot \ell$ 
implies \eqref{eq:precision:necessary}. 
Therefore the minimum of $|\bp|$, subject to the constraints
\eqref{eq:precision:necessary} and \eqref{eq:prec-constant} is only 
of the order $2^\ell \cdot \ell$.
\end{rem} 

\subsection{Approximation of Gaussian Measures}\label{randombitappsec}

In this section we consider a centered Gaussian random element $X$ 
that takes values in a separable Hilbert space $(H,\|\cdot\|_H)$ 
and has an infinite-dimensional support.
The Karhunen-Lo\`eve expansion of $X$ may be written as
\begin{align}\label{series}
X = \sum_{i=1}^\infty \lambda_i^{1/2} \cdot Y_i \cdot e_i
\end{align}
with convergence, e.g., in mean-square with respect to the norm of $H$.
Here $(e_i)_{i \in \N}$ is an orthonormal system in $H$ and 
$(\lambda_i)_{i\in\N}$ is a non-increasing and summable 
sequence of strictly positive numbers,
and
$Y_1,Y_2,\dots$ is an independent sequence of standard normally
distributed random variables.
We assume that
\begin{equation}\label{n-g2}
\lim_{i\to\infty}
\lambda_i \cdot i^{\beta} \cdot (\ln(i))^{\alpha}
\in \left]0,\infty\right[,
\end{equation}
where $\beta > 1$ and $\alpha \in \R$.
The asymptotic behavior of the variances $\lambda_i$ of the random
coefficients of $X$ is known in many cases.
For instance, 
\[
\beta = 2h +1
\]
and 
\[
\alpha= -(d-1)\cdot \beta
\]
for a fractional Brownian sheet $X$ on $[0,1]^d$ with Hurst parameter 
$h \in \left]0,1\right[$ and $H=L_2([0,1]^d)$, see, e.g., 
\cite[p.~1586, p.~1588]{LP04}. In particular,
$\beta=2$ and $\alpha=0$ for a Brownian motion, as well as for a 
Brownian bridge. 

The analysis from the previous section extends to the
case of Gaussian random elements in a straight-forward way.
In contrast to the L\'evy-Ciesielski representation the
Karhunen-Lo\`eve expansion may naturally be truncated after any
number of terms.
For $m \in \mathbb{N}$ we consider a vector 
\[
\bp = (p_1,\ldots,p_m) \in \mathbb{N}^{m}
\]
of bit numbers. We define
\begin{align}\label{gaussian}
X^{(m,\bp)}
= \sum_{i=1}^m \lambda_i^{1/2} \cdot Y^{(p_i)}_i \cdot e_i,
\end{align}
where $Y_i^{(p_i)}$ 
is the approximation of $Y_i$ according to 
\eqref{eq:stnormal-approx}.
Note that the distribution of $X^{(m,\bp)}$ belongs to
$\U(H,|\bp|)$ with
\[
|\bp| = \sum_{i=1}^{m} p_i.
\]

\begin{theo}\label{n-t1}
Let $\mu$ denote the distribution of the Gaussian random element
$X$ on $H$, and assume that \eqref{n-g2} is 
satisfied.
Then we have
\begin{align*}
\rbit(\mu,p)
\asymp
\quant(\mu,p) \asymp 
p^{-(\beta-1)/2} \cdot (\ln(p+1))^{-\alpha/2}.
\end{align*}
Define $\bp(m) \in \N^m$ for $m \in \N$ by
\[
\phantom{\qquad\quad i=1,\dots,m,}
p_i(m) = \lceil \max (\tilde{p}_i(m),1) \rceil,
\qquad\quad i=1,\dots,m,
\]
where
\[
\tilde{p}_i(m) =
\beta \cdot \log_2 (m/i) 
+\max(\alpha,0)\cdot \log_2 (\log_2 (m+1) /\log_2(i+1) ).
\]
Then we have
\[
\left(\E \bigl\|X-X^{(m,\bp(m))}\bigr\|_{H}^2\right)^{1/2}
\asymp
\rbit(\mu,|\bp(m)|)
\]
and
\[
|\bp(m)| \asymp m.
\]
\end{theo}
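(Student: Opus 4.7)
My proof mirrors that of Theorem \ref{p:Bp-rate}, now based on the truncated Karhunen-Lo\`eve expansion \eqref{series} instead of the L\'evy-Ciesielski expansion. The lower bound
$\rbit(\mu,p)\succeq p^{-(\beta-1)/2}(\ln(p+1))^{-\alpha/2}$
is essentially free: Remark \ref{r2.3} yields $\rbit(\mu,p)\geq\quant(\mu,p)$, and the matching asymptotics of $\quant(\mu,p)$ for centered Gaussian measures on Hilbert spaces under \eqref{n-g2} is a known result from the quantization literature, see \cite{LP02,LP04,D03}. It therefore suffices to prove a matching upper bound by analyzing the explicit random bit approximation $X^{(m,\bp(m))}$.

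First I would verify that $|\bp(m)|\asymp m$. The dominant term $\beta\sum_{i=1}^{m}\log_2(m/i)=\beta\log_2(m^m/m!)$ is $\asymp m$ by Stirling's formula; the correction $\max(\alpha,0)\sum_{i=1}^{m}\log_2(\log_2(m+1)/\log_2(i+1))$ is of smaller order; and the rounding and clipping add $O(m)$. Next, by independence of the $Y_i$ and orthonormality of $(e_i)$,
\[
\E\bigl\|X-X^{(m,\bp(m))}\bigr\|_H^2
=\sum_{i>m}\lambda_i
+\sum_{i=1}^{m}\lambda_i\,\E\bigl|Y_i-Y_i^{(p_i(m))}\bigr|^2.
\]
The eigenvalue condition \eqref{n-g2} and an integral comparison yield $\sum_{i>m}\lambda_i\asymp m^{-(\beta-1)}(\ln m)^{-\alpha}$. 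For the second sum, Theorem \ref{p:approx-stnormal-prop} gives $\E|Y_i-Y_i^{(p_i(m))}|^2\asymp 2^{-p_i(m)}/p_i(m)$, and a direct calculation using the definition of $\tilde p_i(m)$ shows that $\lambda_i\cdot 2^{-\tilde p_i(m)}\preceq m^{-\beta}(\ln(m+1))^{-\alpha}$ uniformly in $1\leq i\leq m$. Combined with $\sum_{i=1}^{m}1/p_i(m)\asymp m$, the second sum is also $\preceq m^{-(\beta-1)}(\ln m)^{-\alpha}$. Taking square roots and invoking $|\bp(m)|\asymp m$ gives
\[
\bigl(\E\|X-X^{(m,\bp(m))}\|_H^2\bigr)^{1/2}\preceq |\bp(m)|^{-(\beta-1)/2}(\ln|\bp(m)|)^{-\alpha/2},
\]
which matches the lower bound for $\rbit(\mu,|\bp(m)|)$ from the previous paragraph and thereby closes all three $\asymp$ statements.

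The technical heart of the argument is the careful balancing of logarithmic factors in the bit-quantization sum. The piecewise definition of $\tilde p_i(m)$, including the $\max(\alpha,0)$-clipping, is tailored exactly so that the product $\lambda_i\cdot 2^{-\tilde p_i(m)}$ is uniform in $i$: when $\alpha\geq 0$ the correction term absorbs the $(\ln i)^{-\alpha}$ factor from $\lambda_i$, while when $\alpha<0$ one simply uses $(\ln i)^{-\alpha}\leq(\ln m)^{-\alpha}$, which is harmless. A further subtle point is that $\sum 1/p_i(m)$ is $\asymp m$ rather than the smaller $m/\log m$ one might naively expect from $p_i(m)\approx\beta\log_2(m/i)$: this happens because the clipping at $1$ forces $p_i(m)=O(1)$ on a positive fraction of indices $i$ near $m$, which dominates the sum. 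Once these two logarithmic bookkeeping steps are handled, the rest of the proof is a direct transcription of the Brownian bridge case from Theorem \ref{p:Bp-rate}.
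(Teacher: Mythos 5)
Your proposal is correct and follows essentially the same route as the paper: the lower bound is imported from the known quantization asymptotics, and the upper bound uses the same decomposition into a truncation term and a coordinate-wise bit-quantization term together with the same uniform estimate $\lambda_i\cdot 2^{-\tilde p_i(m)}\preceq m^{-\beta}\cdot(\ln(m+1))^{-\alpha}$. The only cosmetic differences are that you invoke Stirling for $|\bp(m)|\asymp m$ where the paper uses an integral comparison, and that your observation $\sum_{i=1}^m 1/p_i(m)\asymp m$ is more than is needed for the upper bound --- the paper simply uses $p_i\geq 1$.
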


\begin{proof}
We write $\bp$, $p_i$, and $\tilde{p}_i$ instead of $\bp(m)$,
$p_i(m)$, and $\tilde{p}_i(m)$, respectively, to simplify the notation.
Note that
\[
\int_0^1 \frac{1}{x \cdot t+1}\, dt = \frac{\ln(x+1)}{x}
\]
for $x>0$.
It follows that 
$x \mapsto x/\log_2(x+1)$
is strictly increasing on $\left]0,\infty\right[$.
Therefore $m/i \geq \log_2(m+1) / \log_2(i+1)$ and 
\[
p_i \leq 1 + \tilde{p}_i \preceq 1 + \ln(m/i)
\]
uniformly in $m \in \N$ and $i=1,\dots,m$.
Since
\[
\sum_{i=2}^m \ln(m/i) \leq
\int_1^m\ln(m/x)\,\mathrm dx
=m-\ln(m) -1,
\]
we obtain
\begin{align*}
|\bp|
\preceq
m+ \sum_{i=1}^m \ln(m/i)
\leq
2m-1,
\end{align*}
while $|\bp|\geq m$ trivially holds true. We conclude that
$|\bp| \asymp m$, as claimed.

By definition,
\[
\rbit(\mu,|\bp|) \leq 
\left(\E \bigl\|X-X^{(m,\bp)}\bigr\|_{H}^2\right)^{1/2}.
\] 
Hence we show that
\begin{equation}\label{n-g3}
\left(\E \bigl\|X-X^{(m,\bp)}\bigr\|_{H}^2\right)^{1/2}
\preceq
m^{-(\beta-1)/2} \cdot (\ln (m+1))^{-\alpha/2}.
\end{equation}
First of all,
\[
X^{(m)} = \sum_{i=1}^m \lambda_i^{1/2} \cdot Y_i \cdot e_i
\]
with $m \in \N$ satisfies 
\[
\E\bigl\|X - X^{(m)}\bigr\|_{H}^2
\asymp m^{-(\beta-1)} \cdot (\ln (m+1))^{-\alpha},
\]
see \eqref{n-g2}.
Furthermore, Theorem~\ref{p:approx-stnormal-prop} yields 
\[
\E \bigl\|X^{(m)} - X^{(m,\bp)}\bigl\|_{H}^2 
=
\sum_{i=1}^{m} \E 
\bigl(Y_i-Y_i^{(p_i)}\bigr)^2 \cdot \lambda_i 
\asymp
\sum_{i=1}^{m} 
2^{-p_{i}} / p_i \cdot i^{-\beta} \cdot (\ln(i+1))^{-\alpha}
\]
uniformly in $m \in \N$ and $\bp \in \N^{m}$.
For the specific choice of bit numbers $p_i$ we obtain
\[
2^{-p_{i}} \cdot i^{-\beta} \cdot (\ln(i+1))^{-\alpha}
\preceq m^{-\beta} \cdot (\ln(m+1))^{-\alpha}
\]
uniformly in $m \in \N$ and $i=1,\dots,m$. Since $p_i \geq 1$, we
conclude that
\[
\E \bigl\|X^{(m)} - X^{(m,\bp)}\bigl\|_{H}^2 
\preceq 
m^{-(\beta-1)} \cdot (\ln(m+1))^{-\alpha},
\]
which completes the proof of \eqref{n-g3}.

On the other hand,
\[
\quant(\mu,p)
\asymp
p^{-(\beta-1)/2} \cdot (\ln (p+1))^{-\alpha/2},
\]
see, e.g., \cite[p.\ 1581]{LP04}
\end{proof}

\subsection{Approximation of the Distribution of a Scalar SDE}

We consider a scalar autonomous SDE
\begin{align*}
\mathrm{d} X(t)
&=
a(X(t))\,\mathrm{d}t + b(X(t))\,\mathrm{d}W(t), \quad t \in [0,1],\\
X(0)
&=
x_0
\end{align*}
with a deterministic initial value $x_0 \in \mathbb{R}$ and a
scalar Brownian motion $W$. Both, the drift coefficient $a\colon\R\to\R$
and the diffusion coefficient $b\colon\R\to\R$
are assumed to be differentiable with bounded and Lipschitz continuous 
derivatives. This yields, in particular, 
\begin{equation}\label{g5}
\E \sup_{t \in [0,1]} |X(t)|^2 < \infty.
\end{equation}
Furthermore, we assume that $b(x_0) \neq 0$ in order to
exclude the case of a deterministic equation.

At first, we consider the random bit approximation of marginal
distributions of $X$.
To this end we consider the Milstein scheme based on the equidistant 
points
\[
\phantom{\qquad\quad k = 0,\ldots,m,}
t_k = t_{k,m} = k/m, 
\qquad\quad k = 0,\ldots,m,
\]
where $m \in \N$.
In terms of the normalized increments
\[
Y_k = Y_{k,m} = m^{1/2} \cdot \left(W(t_k) - W(t_{k-1})\right)
\]
the scheme reads as
\begin{align*}
X_m(t_0)
&=
x_0,\\
X_m(t_k)
&=
X_m(t_{k-1}) + a\bigl(X_m(t_{k-1})\bigr) \cdot m^{-1} + 
b\bigl(X_m(t_{k-1})\bigr) \cdot m^{-1/2} \cdot Y_k\\
&\hspace{1cm}+ \tfrac{1}{2}\cdot (b \cdot b') 
\bigl(X_m(t_{k-1})\bigr) \cdot m^{-1} \cdot \bigl(Y_k^2 -1\bigr),
\end{align*}
where $k = 1,\ldots,m$. The following result is well known,
see, e.g., \cite[Thm.~1.2.4]{MT04}.

\begin{lemma}\label{l1}
We have
\begin{align*}
\left(
\E \left( \max_{k =1,\ldots,m} |X(t_k) - X_m(t_k)|^2 
\right) \right)^{1/2}
\preceq m^{-1}.
\end{align*}
\end{lemma}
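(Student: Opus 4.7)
The plan is to follow the classical strong-convergence analysis of the Milstein scheme based on an It\^o--Taylor expansion of the exact solution. On each subinterval $[t_{k-1},t_k]$, writing $\Delta W_k = W(t_k) - W(t_{k-1})$ and applying It\^o's formula to $b(X(\cdot))$, one obtains
\[
X(t_k) = X(t_{k-1}) + a(X(t_{k-1}))\, m^{-1} + b(X(t_{k-1}))\, \Delta W_k + \tfrac{1}{2}(b b')(X(t_{k-1}))\,(\Delta W_k^2 - m^{-1}) + R_k,
\]
where $R_k$ consists of iterated It\^o integrals of order at least $3/2$ whose integrands involve $a$, $b$, $b b'$ and their first derivatives composed with $X$. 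Using the boundedness and Lipschitz continuity of $a'$ and $b'$ together with the moment bound \eqref{g5}, standard estimates via It\^o's isometry should give $(\E R_k^2)^{1/2} \preceq m^{-3/2}$ uniformly in $k$, with $\mathcal{F}_{t_{k-1}}$-conditional mean of order $m^{-2}$.

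Setting $e_k = X(t_k) - X_m(t_k)$ and subtracting the Milstein recursion from this expansion produces a difference equation of the form
\[
e_k = e_{k-1} + \Delta a_{k-1}\, m^{-1} + \Delta b_{k-1}\, m^{-1/2} Y_k + \tfrac{1}{2}\, \Delta(b b')_{k-1}\, m^{-1}(Y_k^2 - 1) + R_k,
\]
where $\Delta a_{k-1} = a(X(t_{k-1})) - a(X_m(t_{k-1}))$ and analogously for $\Delta b_{k-1}$ and $\Delta(bb')_{k-1}$. The regularity assumptions on the coefficients, combined with uniform moment bounds on $X$ and $X_m$, control each of these differences in $L^2$ by $C\,(\E e_{k-1}^2)^{1/2}$. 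Squaring, taking expectations, and exploiting the $\mathcal{F}_{t_{k-1}}$-independence of $Y_k$ give a recursion $\E e_k^2 \leq C m^{-1} \sum_{j<k} \E e_j^2 + C m^{-2}$, and a discrete Gronwall inequality yields the pointwise bound $\max_{k \leq m} \E e_k^2 \preceq m^{-2}$.

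To pass from the pointwise bound to $\E \max_{k \leq m} e_k^2$, I would decompose $e_k = A_k + M_k$, where $A_k$ collects the $\mathcal{F}_{t_{j-1}}$-measurable contributions (the $\Delta a_{j-1} m^{-1}$ summands and the drift parts of $R_j$) and $M_k$ is a martingale with respect to $(\mathcal{F}_{t_k})_{k}$ built from the $\Delta W_j$ and $Y_j^2 - 1$ terms together with the martingale parts of $R_j$. Doob's $L^2$-maximal inequality gives $\E \max_k M_k^2 \leq 4\, \E M_m^2$, while $\max_k A_k^2 \leq m \sum_{j=1}^{m} (A_j - A_{j-1})^2$ by Cauchy--Schwarz. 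Both contributions are of order $m^{-2}$ in expectation thanks to the pointwise bound.

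The main obstacle is the careful $L^2$-analysis of the remainder $R_k$: one must expand $a(X(s)) - a(X(t_{k-1}))$, $b(X(s)) - b(X(t_{k-1}))$, and $(bb')(X(s)) - (bb')(X(t_{k-1}))$ by a further application of It\^o's formula, then bound the resulting double and triple iterated It\^o integrals via It\^o's isometry, using only the Lipschitz continuity of $a'$ and $b'$ (so that one never differentiates $b'$ pointwise) together with \eqref{g5}. All other steps --- the Gronwall estimate and the Doob upgrade --- are routine once this local-error bound is in place.
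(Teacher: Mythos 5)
Your outline is correct and is precisely the classical argument (It\^o--Taylor expansion with a remainder of local mean-square order $m^{-3/2}$ and conditional mean of order $m^{-2}$, discrete Gronwall, then Doob's maximal inequality on the martingale part to bring the maximum inside the expectation) that the paper itself does not reproduce but simply cites as well known from Milstein and Tretyakov, Theorem~1.2.4. Your remark that the Lipschitz continuity of $b'$ must be used in place of a pointwise second derivative is exactly the right technical care for the stated hypotheses, so there is nothing to add.
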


Let $q \in \N$.
The approximation 
\[
Y_k^{(q)} = Y_{k,m}^{(q)} = \Phi^{-1} \circ T^{(q)} \circ \Phi (Y_k),
\]
of the normalized increments,
cf.~\eqref{eq:stnormal-approx}, leads to the random bit Milstein scheme
\begin{align*}
X_m^{(q)}(t_0)
&=
x_0,\\
X_m^{(q)}(t_k)
&=
X_m^{(q)}(t_{k-1}) + a\left(X_m^{(q)}(t_{k-1})\right) \cdot m^{-1} + 
b\left(X_m^{(q)}(t_{k-1})\right) \cdot m^{-1/2} \cdot Y_k^{(q)}\\
&\qquad + \tfrac{1}{2} \cdot (b\cdot b')\left(X_m^{(q)}(t_{k-1})\right) 
\cdot m^{-1} \cdot \big(\big(Y_k^{(q)}\big)^2 -1\big),
\end{align*}
where $k = 1,\ldots,m$.

We are going to employ results from \cite{MGR13}, which 
deals with the quantization problem. In the latter setting approximations 
$\widetilde{Y}^{(q)}_k$ to $Y_k$ with distributions in $\F(\R,q)$ and 
error of order $2^{-q}$ are available, see Remark \ref{r1}. 
However, the method of proof for Lemma 3 from \cite{MGR13} 
is immediately applicable in the present setting of random bit
approximation, where we
rely on Theorem~\ref{p:approx-stnormal-prop}.

\begin{lemma}[Cf.\ {\cite[Lemma 3]{MGR13}}]\label{l2}
We have
\begin{align*}
\left(
\E \left(\max_{k =1,\ldots,m} |X_m(t_k) - X_m^{(q)}(t_k)|^2\right) 
\right)^{1/2}
\preceq m^{-1} + 2^{-q/2} \cdot q^{-1/2}
\end{align*}
uniformly in $m,q \in \N$.
\end{lemma}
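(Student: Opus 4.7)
The plan is to follow the template of the proof of \cite[Lemma~3]{MGR13}, systematically replacing the quantization estimate of order $2^{-q}$ used there by the random-bit estimate $(\E|Y_k-Y_k^{(q)}|^2)^{1/2}\asymp 2^{-q/2}q^{-1/2}$ provided by Theorem~\ref{p:approx-stnormal-prop}. Setting $D_k=X_m(t_k)-X_m^{(q)}(t_k)$ and $\mathcal{F}_{k-1}=\sigma(Y_1,\dots,Y_{k-1})$, I would subtract the two Milstein recursions and split each of the three resulting contributions (drift, diffusion, Milstein correction) via
\[
f(X_m)\,g(Y_k)-f(X_m^{(q)})\,g(Y_k^{(q)})=\bigl(f(X_m)-f(X_m^{(q)})\bigr)\,g(Y_k)+f(X_m^{(q)})\,\bigl(g(Y_k)-g(Y_k^{(q)})\bigr),
\]
where $(f,g)$ ranges over $(a,1)$, $(b,y)$, and $(\tfrac{1}{2}bb',\,y^2-1)$. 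The first summand is bounded by $L\,|D_{k-1}|\cdot|g(Y_k)|$ and drives a discrete Gronwall recursion for $\E[\max_{j\le k}D_j^2]$, while the second summand is what must be controlled using Theorem~\ref{p:approx-stnormal-prop}.

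As a prerequisite I would establish the uniform moment bounds $\sup_{m,q,k}\E|X_m^{(q)}(t_k)|^r<\infty$ for every $r\ge 1$, which follow from the at-most-linear growth of $a$, $b$, and $bb'$ together with the uniform bound \eqref{th1eq4} on $\E|Y_k^{(q)}|^r$ and a standard discrete Gronwall iteration; the analogous bound for $X_m$ is classical. For the random-bit parts in the drift and diffusion contributions, the independence of $(Y_k,Y_k^{(q)})$ from $\mathcal{F}_{k-1}$ together with the centring $\E Y_k=\E Y_k^{(q)}=0$ from \eqref{th1eq3} produces a discrete-time martingale in $k$; combining $L^2$-orthogonality and Doob's maximal inequality with \eqref{th1eq2} then bounds their contribution to $(\E\max_k D_k^2)^{1/2}$ by $\preceq 2^{-q/2}q^{-1/2}$.

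The random-bit part of the Milstein correction is the main obstacle, because $(Y_k)^2-(Y_k^{(q)})^2$ is not $\mathcal{F}_{k-1}$-centred. Writing it as $[(Y_k)^2-(Y_k^{(q)})^2-\mu_q]+\mu_q$ with $\mu_q=\E[(Y_k)^2-(Y_k^{(q)})^2]$, the bracketed piece is again a martingale difference and is handled exactly as the diffusion piece above, using $|(Y_k)^2-(Y_k^{(q)})^2|=|Y_k-Y_k^{(q)}|\cdot|Y_k+Y_k^{(q)}|$, Cauchy--Schwarz, and the uniform $L^r$ bounds of \eqref{th1eq4}. The bias contribution $m^{-1}\mu_q\cdot\tfrac{1}{2}\sum_k(bb')(X_m^{(q)}(t_{k-1}))$ has $L^2$-norm at most $|\mu_q|$ times a uniformly bounded factor, and Cauchy--Schwarz combined with \eqref{th1eq2} and \eqref{th1eq4} gives $|\mu_q|\le\|Y_k-Y_k^{(q)}\|_2\cdot\|Y_k+Y_k^{(q)}\|_2\preceq 2^{-q/2}q^{-1/2}$. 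Inserting all estimates into a discrete Gronwall inequality for $\E[\max_{j\le k}D_j^2]$ then yields the claimed bound, with the $m^{-1}$ summand appearing as slack (exactly as in the quantization version of \cite{MGR13}) and being harmless because Lemma~\ref{l2} is applied only together with Lemma~\ref{l1}, whose error is already of order $m^{-1}$.
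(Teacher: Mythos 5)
The paper itself gives no proof of this lemma: it only cites \cite[Lemma~3]{MGR13} and asserts that the method of proof carries over once the quantization error $2^{-q}$ is replaced by the random-bit error $2^{-q/2}\cdot q^{-1/2}$ from Theorem~\ref{p:approx-stnormal-prop}. Your reconstruction follows exactly that route, and its architecture --- the product-rule splitting of the three Milstein terms, the discrete Gronwall recursion for the Lipschitz parts, the martingale/Doob treatment of the centred random-bit parts, and in particular the observation that $Y_k^2-(Y_k^{(q)})^2$ must be re-centred by its mean $\mu_q$ with $|\mu_q|\preceq 2^{-q/2}\cdot q^{-1/2}$ --- is the intended argument.

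One step does not close as stated. For the martingale part of the Milstein correction you need the per-increment variance to satisfy $\Var\bigl(Y_k^2-(Y_k^{(q)})^2\bigr)\preceq 2^{-q/2}\cdot q^{-1/2}$: each of the $m$ increments carries a factor $m^{-1}$, so the summed variance is of order $m^{-1}\cdot\Var(\cdot)$, and only a bound $\Var(\cdot)\preceq 2^{-q/2}\cdot q^{-1/2}$ yields $m^{-1}\cdot\Var(\cdot)\preceq m^{-2}+2^{-q}\cdot q^{-1}$ by the AM--GM inequality. The tools you invoke --- Cauchy--Schwarz in the form $\E\bigl[(Y_k-Y_k^{(q)})^2(Y_k+Y_k^{(q)})^2\bigr]\leq\|Y_k-Y_k^{(q)}\|_4^2\cdot\|Y_k+Y_k^{(q)}\|_4^2$ together with \eqref{th1eq4} --- only give $O(1)$ for the first factor, since \eqref{th1eq2} controls the second moment of $Y_k-Y_k^{(q)}$ but \eqref{th1eq4} says nothing about its fourth moment beyond boundedness; this leaves a contribution of order $m^{-1/2}$ to the error, which is not $\preceq m^{-1}+2^{-q/2}\cdot q^{-1/2}$. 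The gap is fixable within your framework: either prove the fourth-moment analogue $\E|Y_k-Y_k^{(q)}|^4\preceq 2^{-q}\cdot q^{-1}$ (which holds, by the same tail analysis as in the proof of Theorem~\ref{p:approx-stnormal-prop}), or use H\"older with exponents $(4/3,4)$ and $L^p$-interpolation, namely $\E\bigl[(Y_k-Y_k^{(q)})^2(Y_k+Y_k^{(q)})^2\bigr]\leq\|Y_k-Y_k^{(q)}\|_2^{4/3}\cdot\|Y_k-Y_k^{(q)}\|_8^{2/3}\cdot\|Y_k+Y_k^{(q)}\|_8^2\preceq(2^{-q}\cdot q^{-1})^{2/3}\leq 2^{-q/2}\cdot q^{-1/2}$, which uses only \eqref{th1eq2} and \eqref{th1eq4}. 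A further, standard caveat: your bound $L\cdot|D_{k-1}|\cdot|g(Y_k)|$ treats $bb'$ as globally Lipschitz, whereas the hypotheses only make it locally Lipschitz with linearly growing constant; this is handled in \cite{MGR13} via moment estimates and does not affect the structure of the argument.
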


\begin{rem}
Let $\nu^{(q)}_m$ denote the joint distribution of
$X^{(q)}_m(t_1),\dots,X^{(q)}_m(t_m)$ and let $\nu$ denote
the corresponding marginal distribution of $X$.
Consider the supremum norm on $V=\R^m$.
The joint distribution of $Y^{(q)}_1,\dots,Y^{(q)}_m$ belongs to
$\U(\R^m,m q)$,
and therefore $\nu^{(q)}_m \in \V(\R^m,m q)$.
Lemmata \ref{l1} and \ref{l2} yield
\begin{equation}\label{g2}
\rbit(\nu,mq) \leq \left(
\E \left(\max_{k=1,\dots,m} |X(t_k) - X^{(q)}_m(t_k)|^2 \right)
\right)^{1/2}
\preceq m^{-1} + 2^{-q/2} \cdot q^{-1/2}.
\end{equation}
\end{rem}

Now we turn to the random bit approximation of the distribution of 
$X$ on the space $L_2 = L_2([0,1])$.
We employ a piecewise linear interpolation together with a
local refinement of the Milstein approximation
on each of the subintervals $[t_{k-1},t_k]$. To this end we consider
the Brownian bridges
\[
\phantom{\quad\quad t \in [0,1],}
B_k(t) =  B_{k,m}(t) =
m^{1/2} \cdot \left(W(t_{k-1}+t/m) - W(t_{k-1}) \right)
- t \cdot Y_k, 
\quad\quad t \in [0,1],
\]
and we define
\begin{align*}
\overbar{X}_m(t) &= 
(t - t_{k-1}) \cdot m \cdot X_m(t_k) + (t_{k} - t) \cdot
m \cdot X_m(t_{k-1})\\
&\hspace{1cm} + 
b\bigl(X_m(t_{k-1})\bigr) \cdot m^{-1/2} \cdot B_k((t-t_{k-1})\cdot m),
\end{align*}
where $t \in {[t_{k-1},t_k]}$ and $k=1,\ldots,m$. 

\begin{lemma}[{\cite[Lemma 4]{MGR13}}]\label{l:X-vs-overbarX}
We have
\begin{align*}
\sup_{t \in [0,1]} 
\left( \E |X(t) - \overbar{X}_m(t)|^2 \right)^{1/2} \preceq m^{-1}.
\end{align*}
\end{lemma}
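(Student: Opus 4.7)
The plan is to follow the classical one-step strong error analysis of the Milstein scheme, but keeping track of the fact that $\overbar{X}_m$ is built from the \emph{numerical} values $X_m(t_k)$ rather than the exact values $X(t_k)$; Lemma~\ref{l1} will absorb the difference. Fix $k\in\{1,\dots,m\}$ and $t\in[t_{k-1},t_k]$, and set $s=(t-t_{k-1})\cdot m\in[0,1]$. The first observation is a bookkeeping identity: using the Milstein recursion for $X_m(t_k)-X_m(t_{k-1})$, together with $m^{-1/2}(sY_k+B_k(s))=W(t)-W(t_{k-1})$, one rewrites
\[
\overbar{X}_m(t)=X_m(t_{k-1})+a(X_m(t_{k-1}))(t-t_{k-1})+b(X_m(t_{k-1}))\bigl(W(t)-W(t_{k-1})\bigr)+\tfrac{1}{2}(bb')(X_m(t_{k-1}))(t-t_{k-1})(Y_k^2-1).
\]

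Next I introduce the \emph{continuous Milstein approximation started from the exact value} $X(t_{k-1})$,
\[
\widetilde{X}_m(t)=X(t_{k-1})+a(X(t_{k-1}))(t-t_{k-1})+b(X(t_{k-1}))\bigl(W(t)-W(t_{k-1})\bigr)+\tfrac{1}{2}(bb')(X(t_{k-1}))\bigl[(W(t)-W(t_{k-1}))^2-(t-t_{k-1})\bigr],
\]
and split $X(t)-\overbar{X}_m(t)=[X(t)-\widetilde{X}_m(t)]+[\widetilde{X}_m(t)-\overbar{X}_m(t)]$. The first difference is the classical one-step strong error of Milstein: an It\^o--Taylor expansion of $X(t)$ about $t_{k-1}$, combined with the assumption that $a,b$ have bounded and Lipschitz derivatives and with the moment bound \eqref{g5}, gives $(\E|X(t)-\widetilde{X}_m(t)|^2)^{1/2}\preceq(t-t_{k-1})^{3/2}\preceq m^{-3/2}$, which is even stronger than needed.

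For the second difference I use Lipschitz continuity of $a$, $b$, and $bb'$, Lemma~\ref{l1} to control $X(t_{k-1})-X_m(t_{k-1})$ in $L^2$ by $m^{-1}$, the moment bound \eqref{g5}, the independence of the $\mathcal{F}_{t_{k-1}}$-measurable coefficient differences from $W(t)-W(t_{k-1})$, and the elementary bounds $\E[(W(t)-W(t_{k-1}))^2-(t-t_{k-1})]^2\preceq(t-t_{k-1})^2$ and $\E[(t-t_{k-1})(Y_k^2-1)]^2\preceq(t-t_{k-1})^2$ to conclude $(\E|\widetilde{X}_m(t)-\overbar{X}_m(t)|^2)^{1/2}\preceq m^{-1}$. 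The dominant contribution is the coefficient-free mismatch $X(t_{k-1})-X_m(t_{k-1})$; every other summand is of smaller order. Taking the supremum over $t\in[0,1]$, noting that all bounds are uniform in $k$ and $t$, yields the claim.

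The only delicate point is the It\^o--Taylor remainder analysis in the bound for $X(t)-\widetilde{X}_m(t)$, but this is entirely standard in the error theory of the Milstein scheme; the lemma is in fact cited verbatim from \cite[Lemma~4]{MGR13}, and I would ultimately just refer to the argument given there.
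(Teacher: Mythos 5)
The paper offers no proof of this lemma: it is imported verbatim from \cite[Lemma 4]{MGR13}, exactly as you note at the end. Your reconstruction is the standard argument and is sound: the bookkeeping identity rewriting $\overbar{X}_m(t)$ as a one-step continuous Milstein approximation started from $X_m(t_{k-1})$ is correct (since $m^{-1/2}(sY_k+B_k(s))=W(t)-W(t_{k-1})$), the one-step error $X(t)-\widetilde{X}_m(t)$ is indeed $O(m^{-3/2})$ in $L^2$, and the coefficient mismatch is absorbed by Lemma~\ref{l1}. One small caveat: $bb'$ is not globally Lipschitz under the stated hypotheses (only locally, with a linearly growing constant), so rather than invoking its Lipschitz continuity it is cleaner to bound the two Milstein correction terms separately --- each is $O(m^{-1})$ in $L^2$ by the linear growth of $bb'$, the uniform moment bounds, and independence from the increment --- which already suffices for the claimed rate.
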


Finally we choose $\bp(\ell)$ according to \eqref{eq:prec-setup}, 
and we define $X_m^{(q,\ell)}$ analogously to 
$\overbar{X}_m$, replacing $X_m$ by $X_m^{(q)}$ and
$B_k$ by $B_k^{(\ell,\bp(\ell))}=B_{k,m}^{(\ell,\bp(\ell))}$. This leads to 
\begin{align*}
X_m^{(q,\ell)}(t) &= 
(t - t_{k-1}) \cdot m \cdot X^{(q)}_m(t_k) + (t_{k} - t) \cdot
m \cdot X^{(q)}_m(t_{k-1})\\
&\hspace{1cm} + 
b\bigl(X^{(q)}_m(t_{k-1})\bigr) \cdot m^{-1/2} \cdot 
B^{(\ell,\bp(\ell))}_k((t-t_{k-1})\cdot m),
\end{align*}
where $t \in {[t_{k-1},t_k]}$ and $k=1,\ldots,m$. 
The distribution of $B_k^{(\ell,\bp(\ell))}$ belongs to 
$\U (L_2, |\bp(\ell)|)$,  and $|\bp(\ell)| 
= 2^{\ell+2} - 2\ell -4$, see Theorem \ref{p:Bp-rate}.
Therefore
the distribution of $X_m^{(q,\ell)}$ belongs 
to $\V(L_2, c(m,q,\ell))$,
where
\[
c (m,q,\ell) = m\cdot(q + 2^{\ell+2} -2\ell-4).
\]

\begin{lemma}\label{p:main}
We have
\begin{align*}
\left(\mathrm{E} \big\|X - X_m^{(q,\ell)}\big\|_{L_2}^2\right)^{1/2} 
\preceq m^{-1} + 2^{-q/2} \cdot q^{-1/2} + m^{-1/2} \cdot 2^{-\ell/2}
\end{align*}
uniformly in $m,q,\ell \in \N$.
\end{lemma}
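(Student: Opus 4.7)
The plan is to use the triangle inequality
\[
X - X_m^{(q,\ell)} = \bigl(X - \overbar{X}_m\bigr) + \bigl(\overbar{X}_m - X_m^{(q,\ell)}\bigr)
\]
and control the two summands separately. The first summand contributes $\preceq m^{-1}$ after integrating the pointwise bound from Lemma~\ref{l:X-vs-overbarX} over $[0,1]$. For the second summand I will work subinterval by subinterval: on each $[t_{k-1},t_k]$ the difference $\overbar{X}_m - X_m^{(q,\ell)}$ splits into three natural pieces, namely
(i) the piecewise linear interpolation error
$(t-t_{k-1})\,m\cdot\bigl(X_m(t_k)-X_m^{(q)}(t_k)\bigr) + (t_k-t)\,m\cdot\bigl(X_m(t_{k-1})-X_m^{(q)}(t_{k-1})\bigr)$;
(ii) the coefficient increment $\bigl(b(X_m(t_{k-1})) - b(X_m^{(q)}(t_{k-1}))\bigr)\cdot m^{-1/2}\cdot B_k((t-t_{k-1})m)$;
and (iii) the remaining bridge error $b(X_m^{(q)}(t_{k-1}))\cdot m^{-1/2}\cdot\bigl(B_k - B_k^{(\ell,\bp(\ell))}\bigr)((t-t_{k-1})m)$.

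For piece (i), the weights $(t-t_{k-1})m$ and $(t_k-t)m$ form a convex combination on $[t_{k-1},t_k]$, so the pointwise square is dominated by the sum of the squared node errors; integrating, summing over $k$, and invoking Lemma~\ref{l2} delivers a contribution of order $m^{-1}+2^{-q/2}q^{-1/2}$. For piece (ii), Lipschitz continuity of $b$ reduces the task to estimating $\E\bigl(|X_m(t_{k-1})-X_m^{(q)}(t_{k-1})|^2\cdot\|B_k\|_{L_2([0,1])}^2\bigr)$. The key observation is that $X_m(t_{k-1})$ and $X_m^{(q)}(t_{k-1})$ are functions of $W|_{[0,t_{k-1}]}$ only, while $B_k$ is the Brownian bridge component of $W$ on $[t_{k-1},t_k]$, which is independent of both the past of $W$ and of the normalized endpoint increment $Y_k$; hence the expectation factors. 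After the substitution $s=(t-t_{k-1})m$ the factors $m^{-1/2}$ and the Jacobian yield an $m^{-2}$ prefactor per interval, and summing $m$ intervals plus taking a square root gives a term of order $m^{-1/2}\cdot\bigl(m^{-1}+2^{-q/2}q^{-1/2}\bigr)$, which is dominated by the target.

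For piece (iii), I will use the linear growth of $b$ together with the uniform bound $\sup_{m,q,k}\E|X_m^{(q)}(t_{k-1})|^2<\infty$. The latter follows from standard stability of the Milstein scheme combined with the moment bound \eqref{th1eq4} of Theorem~\ref{p:approx-stnormal-prop}, and is effectively established inside the proof of Lemma~\ref{l2}. By the same independence structure as in (ii), the contribution of piece (iii) factors into a bounded moment times $\E\|B_k-B_k^{(\ell,\bp(\ell))}\|_{L_2}^2 \preceq 2^{-\ell}$, the bridge error being controlled via Lemmata~\ref{p:BB-approx-rate} and~\ref{p:BB-approx-vs-Bp} (equivalently, by Theorem~\ref{p:Bp-rate}). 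The change of variable again contributes an $m^{-2}$ factor per interval, so summing the $m$ intervals produces $m^{-1}\cdot 2^{-\ell}$, whose square root is precisely the $m^{-1/2}\cdot 2^{-\ell/2}$ term in the target bound. Combining the $m^{-1}$ from Lemma~\ref{l:X-vs-overbarX} with the three estimates above, and discarding subdominant terms such as $m^{-3/2}$, yields the claim.

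The main obstacle will be the clean bookkeeping of independence: we must explicitly separate each Brownian increment over $[t_{k-1},t_k]$ into the scalar normalized endpoint part $Y_k$ (driving the Milstein step and its bit-truncation $Y_k^{(q)}$) and the independent bridge $B_k$ (driving the interpolation refinement and its approximation $B_k^{(\ell,\bp(\ell))}$), so that in pieces (ii) and (iii) the adapted prefactors decouple from the bridge factors. Once this decomposition is in place, the three estimates are routine and rely only on already proved ingredients; the moment-stability bound on $X_m^{(q)}$ is the only place that genuinely uses \eqref{th1eq4}.
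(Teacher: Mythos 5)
Your proposal is correct and follows essentially the same route as the paper's proof (which itself adapts \cite[Lemma~5]{MGR13}): the same reduction via Lemma~\ref{l:X-vs-overbarX}, the same three-term decomposition of $\overbar{X}_m - X_m^{(q,\ell)}$, the same use of Lemma~\ref{l2}, Lipschitz continuity and linear growth of $b$, the independence of the bridges $B_k$ from the Milstein nodes, and the bridge approximation bound from Theorem~\ref{p:Bp-rate}. The only cosmetic difference is that the paper obtains the uniform second-moment bound on $X_m^{(q)}(t_k)$ directly from \eqref{g5} and \eqref{g2} rather than from a separate stability argument, but this is the same estimate in substance.
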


\begin{proof}
We closely follow the proof of \cite[Lemma 5]{MGR13},
and we write $\bp$ instead of $\bp(\ell)$ to simplify the notation.
Due to Lemma~\ref{l:X-vs-overbarX} it suffices to analyze
$\overbar{X}_m - X_m^{(q,\ell)}$.
This difference is split up into 
\begin{align*}
\overbar{X}_m - X_m^{(q,\ell)} = U_1 + U_2 + U_3,
\end{align*}
where
\begin{align*}
U_1(t) = (t - t_{k-1}) \cdot m \cdot 
\bigl(X_m(t_k) - X_m^{(q)}(t_k)\bigr) + 
(t_k - t) \cdot m \cdot \bigl(X_m(t_{k-1}) - X_m^{(q)}(t_{k-1})\bigr),
\end{align*}
as well as
\begin{align*}
U_2(t) = \bigl(b\bigl(X_m(t_{k-1})\bigr) - b\bigl(X_m^{(q)}(t_{k-1})\bigr)
\bigr) \cdot m^{-1/2} \cdot B_k((t-t_{k-1}) \cdot m)
\end{align*}
and
\begin{align*}
U_3(t) = b\bigl(X_m^{(q)}(t_{k-1})\bigr) \cdot m^{-1/2} \cdot 
\bigl(B_k((t-t_{k-1})\cdot m) - 
B_k^{(\ell,\bp)}((t - t_{k-1})\cdot m)\bigr)
\end{align*}
for $t \in {[t_{k-1},t_k]}$.

Put
\[
\Delta = \Delta_m^{(q)} = \max_{k=1,\ldots,m} |X_m(t_k) - X_m^{(q)}(t_k)|,
\]
and observe that
\[
\E \left( \Delta^2 \right) \preceq m^{-2} + 2^{-q} \cdot q^{-1},
\]
see Lemma \ref{l2}. Clearly $|U_1(t)| \leq \Delta$, and therefore
\[
\E \|U_1\|_{L_2}^2 \preceq \E \left( \Delta^2 \right).
\]
The Lipschitz continuity of $b$ yields 
\begin{align*}
|U_2(t)| \preceq
\Delta \cdot m^{-1/2} \cdot B_k((t -t_{k-1}) \cdot m)
\end{align*}
for $t \in [t_{k-1},t_k]$ and $k=1,\ldots,m$. Moreover,
\begin{align*}
\E \|B_k((\cdot -t_{k-1}) \cdot m)\|_{L_2([t_{k-1},t_k])}^2 \asymp m^{-1}.
\end{align*}
We use the independence of $\Delta$ and
$(B_1,\ldots,B_m)$ to conclude that
\[
\E \|U_2\|_{L_2}^2
\preceq
\E  \left(\Delta^2\right) \cdot 
m^{-1} \cdot \sum_{k=1}^m 
\E\|B_k((\cdot - t_{k-1}) \cdot m)\|_{L_2([t_{k-1},t_k])}^2
\asymp
 m^{-1} \cdot \E \left(\Delta^2\right).
\]
Altogether
\begin{align*}
\E \|U_1 + U_2\|_{L_2}^2 
\preceq
m^{-2} + 2^{-q} \cdot q^{-1}.
\end{align*}

It remains to consider the term $U_3$.
{}From \eqref{g5} and \eqref{g2} we get
\[
\sup_{m\in\N,q\in\N}\E \max_{k=1,\dots,m} |X_m^{(q)}(t_k)|^2  < \infty.
\]
Moreover,
\begin{align*}
\E \|B_k((\cdot -t_{k-1}) \cdot m) -
B_k^{(\ell,\bp)}((\cdot -t_{k-1}) \cdot m) 
\|_{L_2([t_{k-1},t_k])}^2 
& \asymp 
m^{-1} \cdot \E \|B - B^{(\ell,\bp)}\|_{L_2}^2 \\
& \asymp m^{-1} \cdot 2^{-\ell},
\end{align*}
see Theorem~\ref{p:Bp-rate}.
Since $b$ satisfies a linear growth condition we get 
\[
\E \|U_3\|_{L_2}^2 \preceq m^{-1} \cdot 2^{-\ell}
\]
from the independence of 
$\max_{k=1,\dots,m} |X_m^{(q)}(t_k)|$ and $(B_1,\ldots,B_m)$.
\end{proof}

\begin{rem}
Suppose that the Euler scheme, instead of the Milstein scheme,
would be employed in the definition of $X_m^{(q,\ell)}$.
Then the first term in the upper bound from Lemma~\ref{p:main} would
change from $m^{-1}$ to $m^{-1/2}$, so that altogether 
\begin{align*}
\left(\mathrm{E} \big\|X - X_m^{(q,\ell)}\big\|_{L_2}^2\right)^{1/2} 
\preceq m^{-1/2} + 2^{-q/2} \cdot q^{-1/2},
\end{align*}
which does not suffice for our purposes.
\end{rem}

\begin{theo}\label{p5}
Let $\mu$ denote the distribution of $X$ on $L_2$. Then we have
\begin{align*}
\rbit(\mu,p)
\asymp
\quant(\mu,p)
\asymp 
p^{-1/2}.
\end{align*}
Furthermore, let
\[
m(\ell)=2^\ell,\qquad q(\ell) = 2 \ell,
\]
and $c(\ell) = c(m(\ell),q(\ell),\ell)$. Then we have
\[
\left(
\mathrm{E} \big\|X - X_{m(\ell)}^{(q(\ell),\ell)}\big\|_{L_2}^2\right)^{1/2} 
\asymp
\rbit(\mu,c(\ell))
\]
and
\[
c(\ell) = 2^{\ell+2} \cdot (2^\ell-1)
\asymp 2^{2 \ell}.
\]
\end{theo}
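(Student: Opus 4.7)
The plan is to prove the three claims of the theorem in the following order: the lower bound on $\rbit(\mu,p)$, the arithmetic identity for $c(\ell)$, the matching upper bound via the approximation $X_{m(\ell)}^{(q(\ell),\ell)}$, and finally extension from the discrete subsequence $p = c(\ell)$ to all $p \in \N$.

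First, for the lower bound, I would cite the known quantization result for scalar autonomous SDEs under the stated nondegeneracy assumption $b(x_0) \neq 0$, namely $\quant(\mu,p) \succeq p^{-1/2}$ (see, e.g., the quantization literature referenced in \cite{MGR13} and \cite{LP04}), and combine it with the general inequality $\rbit(\mu,p) \geq \quant(\mu,p)$ from Remark~\ref{r2.3} (or the analogous statement in Section~1). This gives $\rbit(\mu,p) \succeq p^{-1/2}$, and it remains only to prove the matching upper bound.

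For $c(\ell)$, a direct substitution into $c(m,q,\ell) = m(q + 2^{\ell+2} - 2\ell - 4)$ with $m(\ell) = 2^\ell$ and $q(\ell) = 2\ell$ yields $c(\ell) = 2^\ell \cdot (2\ell + 2^{\ell+2} - 2\ell - 4) = 2^{\ell+2}(2^\ell - 1)$, which is $\asymp 2^{2\ell}$ and hence $c(\ell)^{-1/2} \asymp 2^{-\ell}$. Now I plug $m = m(\ell)$, $q = q(\ell)$ into Lemma~\ref{p:main}:
\[
\left(\E \bigl\|X - X_{m(\ell)}^{(q(\ell),\ell)}\bigr\|_{L_2}^2\right)^{1/2}
\preceq 2^{-\ell} + 2^{-\ell} \cdot (2\ell)^{-1/2} + 2^{-\ell/2} \cdot 2^{-\ell/2}
\preceq 2^{-\ell}.
\]
Since the distribution of $X_{m(\ell)}^{(q(\ell),\ell)}$ lies in $\V(L_2, c(\ell))$, this shows $\rbit(\mu, c(\ell)) \preceq 2^{-\ell} \asymp c(\ell)^{-1/2}$, and combined with the lower bound already established we obtain $\rbit(\mu, c(\ell)) \asymp c(\ell)^{-1/2}$ together with the claimed asymptotic equivalence of the approximation error and $\rbit(\mu, c(\ell))$.

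Finally, to pass from $p = c(\ell)$ to an arbitrary $p \in \N$, I would use the monotonicity of $\rbit(\mu, \cdot)$ from Remark~\ref{r2.1}. Since $c(\ell+1)/c(\ell) \to 4$, the ratios are bounded, so for each $p$ with $c(\ell) \leq p < c(\ell+1)$ we have $c(\ell)^{-1/2} \asymp p^{-1/2}$ and $\rbit(\mu,p) \leq \rbit(\mu, c(\ell)) \preceq c(\ell)^{-1/2} \asymp p^{-1/2}$; for small $p$ not covered by the subsequence the bound is trivially preserved by monotonicity. The main (non-routine) ingredient is really Lemma~\ref{p:main}, which has already been established; the remaining obstacle here is merely choosing the triple $(m,q,\ell)$ so that all three error contributions and the bit budget are simultaneously balanced, which the choice $m(\ell) = 2^\ell$, $q(\ell) = 2\ell$ achieves because $2^{\ell+2}$ dominates the bit count while all three error terms collapse to the order $2^{-\ell} = c(\ell)^{-1/2}$.
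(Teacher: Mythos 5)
Your proposal is correct and follows essentially the same route as the paper: the upper bound comes from Lemma~\ref{p:main} with the choice $m(\ell)=2^\ell$, $q(\ell)=2\ell$ (all three error terms collapsing to order $2^{-\ell}\asymp c(\ell)^{-1/2}$), the lower bound comes from the known quantization asymptotics $\quant(\mu,p)\asymp p^{-1/2}$ (the paper cites \cite[Thm.~1.1]{D07}) together with $\rbit\geq\quant$, and the formula for $c(\ell)$ is direct substitution. Your explicit treatment of the passage from the subsequence $p=c(\ell)$ to general $p$ via monotonicity and bounded ratios is a detail the paper leaves implicit, but it is the standard and intended argument.
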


\begin{proof}
We write $m$, $q$, and $\bp$ instead of $m(\ell)$, $q(\ell)$,
and $\bp(\ell)$, respectively, to simplify the notation.
By definition, 
\[
\rbit(\mu,c(\ell)) \leq 
\left(
\mathrm{E} \big\|X - X_{m}^{(q,\ell)}\big\|_{L_2}^2\right)^{1/2}.
\]
Hence we show that
\begin{equation}\label{g97}
\left(
\mathrm{E} \big\|X - X_{m}^{(q,\ell)}\big\|_{L_2}^2\right)^{1/2}
\preceq c(\ell)^{-1/2}.
\end{equation}
Use Lemma \ref{p:main} to derive
\[
\left(\mathrm{E} \big\|X - X_m^{(q,\ell)}\big\|_{L_2}^2\right)^{1/2} 
\preceq 2^{-\ell}.
\]
The explicit formula for $c(\ell)$ obviously holds true,
and this completes the proof of the asymptotic upper bound \eqref{g97}.

On the other hand,
\[
\quant(\mu,p) \asymp p^{-1/2},
\]
see \cite[Thm.~1.1]{D07}; the same asymptotic result for quantization 
is derived in \cite{CDMGR09,LP06} under stronger assumptions.
\end{proof}

\section{Random Bit Quadrature with respect to Gaussian Measures}\label{sec3}

As in Sections \ref{s3} and \ref{randombitappsec}
we consider a centered Gaussian random element $X$ that takes 
values in an infinite-dimensional separable Hilbert space
$(H,\|\cdot\|_H)$.
We define and analyze algorithms
that use random bits for the approximation of
\begin{align*}
S(f) =\E(f(X))
\end{align*}
for functionals
\[
f\colon H\to\R
\]
that are Lipschitz continuous with Lipschitz constant one, i.e., 
\begin{align*}
|f(x)-f(y)|
\leq \|x-y\|_{H}
\end{align*}
for all $x,y\in H$.
For comparison we also consider algorithms that
may use uniformly distributed random numbers from $[0,1]$ 
instead of random bits.

Let $\Lip_1$ denote the corresponding class of all such functionals $f$, and
let $\mu$ denote the distribution of 
$X$ on $H$. Of course, the output $A(f)$ of a randomized
(Monte Carlo)
algorithm $A$ on input $f \in \Lip_1$ is a random quantity, and
therefore the worst case error of $A$ on the class $\Lip_1$ is defined by
\begin{align*}
e(A,\Lip_1,\mu)
=\sup_{f\in \Lip_1}
\left(\mathrm{E} \left|S(f)-A(f)\right|^2\right)^{1/2}.
\end{align*}

Consider any increasing sequence
\[
H_1 \subseteq H_2 \subseteq \dots
\]
of finite-dimensional subspaces of $H$ such that $\dim H_n = n$ for 
$n \in \N$, and put $\tilde{H} = \bigcup_{n=1}^\infty H_n$ as well
as $H_0 = \emptyset$. 
We suppose that a randomized algorithm 
may evaluate any functional $f \in \Lip_1$ at any point
$x \in \tilde{H}$ at cost $n$, if $x \in H_n \setminus H_{n-1}$. 
Furthermore, 
algorithms are assumed to perform arithmetic operations with real
numbers exactly and to evaluate elementary functions at unit cost. 
Finally, the algorithms have access to a 
random number generator at cost one per call, and here we 
distinguish two cases. If the generator provides random bits, 
we use the term of a restricted Monte Carlo algorithm. Otherwise,
if the generator provides random numbers from $[0,1]$, the
algorithm is called a Monte Carlo algorithm.

By $\cost (A,f)$ we denote the
cost for applying the randomized algorithm $A$ to the functional
$f$, which is defined as the sum of the cost associated to every 
instruction that is carried out. Observe that $\cost(A,f)$ is a
random quantity, analogously to $A(f)$, and therefore the 
worst case cost of $A$ on the class $\Lip_1$ is defined by 
\begin{align*}
\cost(A,\Lip_1)
=\sup_{f\in \Lip_1}
\mathrm{E} (\cost(A,f)).
\end{align*}
This cost model, which is called variable subspace sampling, is
appropriate for quadrature problems on infinite-dimensional spaces,
see \cite{CDMGR09} for details and for the mild measurability
assumptions involved. The latter are obviously satisfied
for the specific algorithms to be constructed below.

We are particularly interested in multilevel Monte Carlo algorithms, see
\cite{G15} for a survey. At first we consider the setting from
Section \ref{randombitappsec} with the natural choice of
subspaces
\[
H_n = \spa \{e_1,\dots,e_n\}.
\]
It follows, in particular, that $\tilde{H}$ is a dense subspace of the 
support of $\mu$. 
In the present setting of a quadrature
problem with random bits we construct a multilevel algorithm as follows.
Let $L\in\N$ be the maximal level 
of the multilevel algorithm.
On every level $\ell = 2, \dots, L$ the algorithm involves a
fine and a coarse approximation that are based on the first $m=2^\ell$ 
and $m=2^{\ell-1}$ terms, respectively,
of the Karhunen-Lo\`eve expansion of $X$. On level $\ell=1$
we only consider the fine approximation with $m=2$ terms.
The bit numbers that
are used to approximate the random coefficients $Y_i$ of $X$ are chosen
according to Theorem \ref{n-t1}.
In this way we have two dimensions of discretization:
the truncation level for the Karhunen-Lo\`eve expansion
and the bit numbers for the approximation of the random
coefficients.

Let $N_1,\dots,N_L\in\N$ be the replication numbers on the
levels $1,\dots,L$, and let $X_{\ell,j}$ with $\ell = 1,\dots,L$
and $j=1,\dots,N_\ell$ denote independent copies of $X$.
Recall the definition of $\bp(m)\in \N^m$ and $X^{(m,\bp(m))}$
according to Theorem~\ref{n-t1}.
We study the multilevel Monte Carlo algorithm
\begin{align*}
A^{L,N_1,\dots,N_L}(f)
=\frac{1}{N_1}\sum_{j=1}^{N_1}f(X_{1,j}^{(2,\bp(2))})
+\sum_{\ell=2}^{L}
\frac{1}{N_\ell}
\sum_{j=1}^{N_\ell}
\bigl(
f(X_{\ell,j}^{(2^\ell,\bp(2^\ell))})
-f(X_{\ell,j}^{(2^{\ell-1},\bp(2^{\ell-1}))})\bigr).
\end{align*}

At first we show that this algorithm only requires 
$\sum_{\ell=1}^L N_\ell \cdot |\bp(2^\ell)|$
calls to the random number generator for random bits.
Since the $X_{\ell,j}$ are independent copies of $X$ and since 
$p_i(2^{\ell-1})\leq p_i(2^\ell)$ for $i=1,\dots,2^{\ell-1}$ and 
$\ell\in\N$, it suffices to show that the joint distribution of 
$X^{(m,\bp)}$ and $X^{(\tilde{m},\tilde{\bp})}$ can be simulated using 
$|\bp|$ random bits, where $1 \leq \tilde{m} \leq m$ as well as 
$\bp \in \mathbb{N}^{m}$ and $\tilde{\bp} \in \N^{\tilde{m}}$
with $\tilde p_i\leq p_i$ for all $i=1,\dots,\tilde m$.
Recall that $X = \sum_{i=1}^\infty \lambda_i^{1/2} \cdot Y_i \cdot e_i$.
For $i=1,\dots,m$ define
\[
U_i = T^{(p_i)} \circ \Phi(Y_i).
\]
By definition we have
\begin{align}\label{smp1}
X^{(m,\bp)} 
& = \sum_{i=1}^{m} \lambda_i^{1/2} \cdot Y^{(p_i)}_i \cdot e_i
= \sum_{i=1}^{m} \lambda_i^{1/2} \cdot \Phi^{-1} \circ 
T^{(p_i)} \circ \Phi (Y_i)  \cdot e_i \\
\notag
& = \sum_{i=1}^{m} \lambda_i^{1/2} \cdot \Phi^{-1} (U_i)  \cdot e_i.
\end{align}
Since $\tilde p_i\leq p_i$ for $i=1,\dots,\tilde m$, we 
get $T^{(\tilde p_i)}=T^{(\tilde p_i)}\circ T^{(p_i)}$ and therefore
\begin{align}\label{smp2}
X^{(\tilde m,\tilde \bp)} 
&= \sum_{i=1}^{\tilde m} \lambda_i^{1/2} \cdot Y^{(\tilde p_i)}_i \cdot e_i
= \sum_{i=1}^{\tilde m} \lambda_i^{1/2} \cdot 
\Phi^{-1} \circ T^{(\tilde p_i)} \circ 
\Phi (Y_i)  \cdot e_i \\
\notag
& = \sum_{i=1}^{\tilde m} \lambda_i^{1/2} \cdot
\Phi^{-1} \circ T^{(\tilde p_i)}(U_i)  \cdot e_i.
\end{align}
Combining \eqref{smp1} and \eqref{smp2} with the fact that $U_i$ is 
uniformly distributed on $D^{(p_i)}$
and $U_1,\ldots,U_m$ are independent, we conclude that the joint 
distribution of $X^{(m,\bp)}$ and $X^{(\tilde{m},\tilde{\bp})}$
can be simulated using $|\bp|$ random bits.

\begin{theo}\label{thmmlmc}
Let $\mu$ denote the distribution of the Gaussian random element
$X$ on $H$, assume that \eqref{n-g2} is 
satisfied,
and let $\eps\in\left]0,\exp(-2)\right]$. Choose
\[
L = L(\eps) = 
\left\lceil \frac{2}{\beta-1}\cdot 
\log_2\left(z(\eps)\right) \right\rceil
\]
with
\[
z = z(\eps) = 
1+\eps^{-1}\cdot\left(\ln(\eps^{-1})\right)^{-\alpha/2}
\]
as well as
\[
N_\ell =N_\ell(\eps) = \left\lceil 
2^{-\ell \beta/2} \cdot \ell^{-\alpha/2} \cdot
K(\eps) \right\rceil
\]
for $\ell=1,\dots,L$, where
\begin{align*}
K = K(\eps) =
\eps^{-\max\left(2,\beta/(\beta-1)\right)}
\cdot
\begin{cases}
1,
&\text{if }\beta>2,\\
(\ln(\eps^{-1}))^{\max(0,1-\alpha/2)},
&\text{if }\beta=2 \wedge  \alpha\neq 2,\\
\ln(\ln(\eps^{-1})),
&\text{if }\beta=2 \wedge  \alpha=2,\\
(\ln(\eps^{-1}))^{\tfrac{\alpha}{2(1-\beta)}},
&\text{if }\beta<2.
\end{cases}
\end{align*}
Then the random bit multilevel algorithm
$A^{(\eps)}=A^{L,N_1,\dots, N_L}$ satisfies
\begin{align*}
e(A^{(\eps)},\Lip_1,\mu)
\preceq \eps
\end{align*}
and
\begin{align*}
\cost(A^{(\eps)},\Lip_1)
\asymp
\eps^{-\max(2,2/(\beta-1))}
\cdot
\begin{cases}
1,
&\text{if }\beta>2,\\
(\ln(\eps^{-1}))^{\max(0,2-\alpha)},
&\text{if }\beta=2 \wedge  \alpha\neq 2,\\
(\ln(\ln(\eps^{-1})))^2,
&\text{if }\beta=2 \wedge  \alpha=2,\\
(\ln(\eps^{-1}))^{\tfrac{\alpha}{1-\beta}},
&\text{if }\beta<2.
\end{cases}
\end{align*}
\end{theo}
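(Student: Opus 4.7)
The proof follows the standard multilevel Monte Carlo template. Decomposing $S(f)-A^{(\eps)}(f)$ by the telescope identity and exploiting the independence of the copies $X_{\ell,j}$ across levels yields
\begin{equation*}
\E|S(f)-A^{(\eps)}(f)|^2 = \bigl(S(f)-\E f(X^{(2^L,\bp(2^L))})\bigr)^2 + \sum_{\ell=1}^L \frac{\Var(Z_\ell)}{N_\ell},
\end{equation*}
where $Z_1=f(X^{(2,\bp(2))})$ and $Z_\ell=f(X^{(2^\ell,\bp(2^\ell))})-f(X^{(2^{\ell-1},\bp(2^{\ell-1}))})$ for $\ell\geq 2$.

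For the bias, the Lipschitz property of $f$ with Jensen's inequality gives $|S(f)-\E f(X^{(2^L,\bp(2^L))})|\leq(\E\|X-X^{(2^L,\bp(2^L))}\|_H^2)^{1/2}$, which by Theorem \ref{n-t1} is $\preceq 2^{-L(\beta-1)/2}\cdot L^{-\alpha/2}$. The formula for $L(\eps)$ is chosen precisely so that $2^{L(\beta-1)/2}\asymp z(\eps)$; together with $L\asymp \log_2(z(\eps))\asymp\ln(\eps^{-1})$ and $z(\eps)^{-1}\asymp\eps\cdot(\ln(\eps^{-1}))^{\alpha/2}$ this makes the bias $\preceq\eps$, using $\eps\leq\exp(-2)$ to guarantee that $L$ and $\ln(\eps^{-1})$ are comparable and all logarithms positive.

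For the variance, Lipschitz continuity and the triangle inequality combined with Theorem \ref{n-t1} applied to $X-X^{(2^\ell,\bp(2^\ell))}$ and to $X-X^{(2^{\ell-1},\bp(2^{\ell-1}))}$ yield $\Var(Z_\ell)\preceq 2^{-\ell(\beta-1)}\cdot\ell^{-\alpha}$. Inserting the prescribed $N_\ell$ gives
\begin{equation*}
\sum_{\ell=1}^L\frac{\Var(Z_\ell)}{N_\ell} \preceq \frac{1}{K(\eps)}\sum_{\ell=1}^L 2^{\ell(1-\beta/2)}\cdot\ell^{-\alpha/2},
\end{equation*}
and a case analysis on the sign of $1-\beta/2$ evaluates the sum: bounded for $\beta>2$; of order $L^{1-\alpha/2}$, $\ln(L)$, or $O(1)$ at $\beta=2$ according to $\alpha<2$, $\alpha=2$, or $\alpha>2$; and dominated by the top term $2^{L(1-\beta/2)}L^{-\alpha/2}$ for $\beta<2$. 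The specific form of $K(\eps)$ is exactly what is needed to absorb each of these factors into $\eps^2$.

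For the cost, the random bit analysis preceding the theorem shows that level $\ell$ uses $N_\ell\cdot|\bp(2^\ell)|\asymp N_\ell\cdot 2^\ell$ random bits, and the variable subspace sampling convention assigns cost $2^\ell$ to each evaluation of $f$ at a point of $H_{2^\ell}$. Therefore $\cost(A^{(\eps)},\Lip_1)\asymp\sum_{\ell=1}^L N_\ell\cdot 2^\ell\asymp K(\eps)\cdot\sum_{\ell=1}^L 2^{\ell(1-\beta/2)}\ell^{-\alpha/2}$, and the same case analysis, together with the identity $2^L\asymp\eps^{-2/(\beta-1)}\cdot(\ln(\eps^{-1}))^{-\alpha/(\beta-1)}$, reproduces the stated expressions. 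The main obstacle is the bookkeeping at $\beta=2$, where the three subcases in $\alpha$ force three separate balances between the growth of $L$ and the factor $K(\eps)$; additionally, the ceilings in the definition of $N_\ell$ contribute at most an additive $\sum_\ell 2^\ell\asymp 2^L\asymp\eps^{-2/(\beta-1)}$ to the cost, which has to be checked to lie within the leading term in every case.
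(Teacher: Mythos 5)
Your proposal is correct and follows essentially the same route as the paper: the standard bias--variance decomposition of the mean squared error, the bias and level-variance bounds via Theorem~\ref{n-t1}, the identification of the cost with $K(\eps)\cdot\sum_{\ell=1}^L 2^{\ell(1-\beta/2)}\ell^{-\alpha/2}$, and the same case analysis of that sum. The only (immaterial) difference is how the ceilings in $N_\ell$ are handled: the paper checks that the un-rounded value $2^{-\ell\beta/2}\ell^{-\alpha/2}K(\eps)$ is $\succeq 1$ uniformly, so that $N_\ell$ is of that order, whereas you bound the rounding contribution additively by $\sum_\ell 2^\ell\asymp 2^L$ and verify it is dominated -- both work.
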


\begin{proof}
At first we consider the  cost of $A^{(\eps)}$. Theorem~\ref{n-t1} implies
$|\bp(2^{\ell})| \asymp 2^\ell$, so that
\[
\cost(A^{(\eps)},\Lip_1) \asymp \sum_{\ell=1}^L 2^\ell\cdot N_\ell.
\]
In fact the number of calls to the random number generator for random 
bits is of this order,
see the discussion directly before Theorem~\ref{thmmlmc},
and the same holds true for the number of arithmetic operations as well 
as for the cost associated to the evaluation of $f$.
Observe that 
\[
L \asymp \ln(\eps^{-1})
\]
and
\[
2^L \asymp z^{\tfrac{2}{\beta-1}}.
\]
Therefore
\[
2^{-L\beta/2}\cdot L^{-\alpha/2}
\asymp
z^{\tfrac{\beta}{1-\beta}} \cdot L^{-\alpha/2}
\asymp
\eps^{\tfrac{\beta}{\beta-1}}
\cdot
(\ln(\eps^{-1}))^{\tfrac{\alpha}{2(\beta-1)}}.
\]
We conclude that
\[
2^{-\ell \beta/2} \cdot \ell^{-\alpha/2} \cdot K
\succeq 
2^{-L \beta/2} \cdot L^{-\alpha/2} \cdot K
\succeq 1,
\]
and consequently that
\[
N_\ell \asymp 2^{-\ell \beta/2} \cdot \ell^{-\alpha/2} \cdot K
\]
both hold uniformly in $\eps$ and $\ell=1,\dots,L$.
It follows that
\begin{equation}\label{n-g10}
\cost(A^{(\eps)},\Lip_1) \asymp K \cdot \sum_{\ell=1}^L 
2^{\ell (1-\beta/2)} \cdot \ell^{-\alpha/2}. 
\end{equation}
Furthermore
\[
\sum_{\ell=1}^L
2^{\ell(1-\beta/2)}
\cdot
\ell^{-\alpha/2}
\asymp
\begin{cases}
1,
&\text{if $\beta > 2$},\\
L^{\max(0,1-\alpha/2)},
&\text{if $\beta = 2 \wedge \alpha\neq 2$},\\
\ln(1+L),
&\text{if $\beta = 2 \wedge \alpha = 2$},\\
2^{L(1-\beta/2)}
\cdot
L^{-\alpha/2},
&\text{if $\beta < 2$}.
\end{cases}
\]
Since
\[
2^{L(1-\beta/2)}
\cdot L^{-\alpha/2}
\asymp
z^{\tfrac{2-\beta}{\beta-1}} \cdot L^{-\alpha/2}
\asymp
\eps^{-\tfrac{2-\beta}{\beta-1}}
\cdot
(\ln(\eps^{-1}))^{\tfrac{\alpha}{2(1-\beta)}},
\]
we obtain
\begin{equation}\label{eq100}
\sum_{\ell=1}^L
2^{\ell(1-\beta/2)}
\cdot
\ell^{-\alpha/2}
\asymp
\begin{cases}
1,
&\text{if $\beta>2$},\\
(\ln(\eps^{-1}))^{\max(0,1-\alpha/2)},
&\text{if $\beta=2 \wedge \alpha\neq 2$},\\
\ln(\ln(\eps^{-1})),
&\text{if $\beta=2 \wedge \alpha= 2$},\\
\eps^{-\tfrac{2-\beta}{\beta-1}}
\cdot
(\ln(\eps^{-1}))^{\tfrac{\alpha}{2(1-\beta)}},
&\text{if $\beta<2$}.
\end{cases}
\end{equation}
Together with \eqref{n-g10} this yields
the asymptotic estimate for $\cost(A^{(\eps)},\Lip_1)$ as claimed.

It remains to establish the asymptotic upper bound
for the error of $A^{(\eps)}$.
Observe that
\[
\left|S(f) - \mathrm{E} (A^{(\eps)}(f))\right| = 
\left|\mathrm{E} (f(X)) - \mathrm{E} (f(X^{(2^L,\bp(2^L))}))\right|
\leq \mathrm{E} \left\|X - X^{(2^L,\bp(2^L))}\right\|_H
\]
and
\begin{align*}
\Var \left(f(X^{(2,\bp(2))})\right)
&=\Var \left(f(X^{(2,\bp(2))})-f(0)\right)\\
&\leq \mathrm{E} \left|f(X^{(2,\bp(2))})-f(0) \right|^2
\leq \mathrm{E} \left\|X^{(2,\bp(2))}\right\|^2_H
<\infty
\end{align*}
as well as
\begin{multline*}
\Var \left(f(X^{(2^\ell,\bp(2^\ell))}) - 
f(X^{(2^{\ell-1},\bp(2^{\ell-1}))})\right)\\
\leq 2\cdot \mathrm{E} \left\|X - X^{(2^\ell,\bp(2^\ell))}\right\|^2_H
+2\cdot \mathrm{E} \left\|X - X^{(2^{\ell-1},\bp(2^{\ell-1}))}\right\|^2_H
\end{multline*}
for $\ell=2,\dots,L$,
due to the Lipschitz continuity of $f \in \Lip_1$.
{}From Theorem~\ref{n-t1} we hence get
\[
\sup_{f\in \Lip_1}|S(f)-\mathrm{E} (A^{(\eps)}(f))|
\preceq
2^{-L(\beta-1)/2}
\cdot
L^{-\alpha/2} \asymp z^{-1} \cdot L^{-\alpha/2} \asymp \eps
\]
as well as
\begin{align*}
\sup_{f\in \Lip_1}
\Var(A^{(\eps)}(f))
\preceq
\sum_{\ell=1}^L
\frac{1}{N_\ell}
\cdot 2^{-\ell (\beta-1)}
\cdot \ell^{-\alpha}
\asymp
K^{-1} \cdot
\sum_{\ell=1}^L
2^{\ell (1-\beta/2)} \cdot \ell^{-\alpha/2}
\asymp \eps^2,
\end{align*}
see \eqref{eq100}, and therefore
$e(A^{(\eps)},\Lip_1,\mu) \preceq \eps$ as claimed.
\end{proof}

The same analysis applies to the setting of a Brownian bridge $X$,
as studied in Section \ref{s3}. Here we employ a multilevel
algorithm with the first $2^\ell-1$ and $2^{\ell-1}-1$ terms,
respectively, of the L\'evy-Ciesielski representation of $X$
for levels $\ell \geq 2$, and with only the first term for level
$\ell=1$. Furthermore, the bit numbers are chosen according
to Theorem~\ref{p:Bp-rate}. Theorem \ref{thmmlmc} holds true also in this
case with $\beta=2$ and $\alpha=0$.

In order to analyze the optimality of the random bit multilevel algorithm
we consider the $n$-th minimal error for the random bit quadrature
problem, which is defined by
\begin{align*}
e^\res_n(\Lip_1,\mu)
=\inf\{e(A,\Lip_1,\mu)\colon 
\text{$A$ restricted Monte Carlo algorithm, 
$\cost(A,\Lip_1)\leq n$}\}
\end{align*}
for $n \in \N$.
For comparison we also consider
\begin{align*}
e_n(\Lip_1,\mu)
=\inf\{e(A,\Lip_1,\mu)\colon 
\text{$A$ Monte Carlo algorithm, 
$\cost(A,\Lip_1)\leq n$}\}
\end{align*}

\begin{cor}\label{corquad}
The $n$-th minimal error of restricted Monte Carlo
algorithms satisfies
\begin{align*}
e^\res_n(\Lip_1,\mu)
\preceq
n^{-\min(1/2,(\beta-1)/2)}
\cdot
\begin{cases}
1,
&\text{if }\beta>2,\\
(\ln(n))^{\max(0,1-\alpha/2)},
&\text{if }\beta=2\ \wedge \alpha\neq 2,\\
\ln(\ln(n)),
&\text{if }\beta=2\ \wedge \alpha= 2,\\
(\ln(n))^{-\alpha/2},
&\text{if }\beta<2.
\end{cases}
\end{align*}
The $n$-th minimal error of Monte Carlo algorithms satisfies
\begin{align*}
n^{-(\beta-1)/2}
\cdot
(\ln(n))^{-\alpha/2}
\preceq
e_n(\Lip_1,\mu)
\end{align*}
if $\beta\leq 2$, and
\begin{align*}
\limsup_{n\to\infty}
\left(
e_n(\Lip_1,\mu)
\cdot
\big(
n^{1/2}
\cdot
(\ln(n))^{(1+\beta)/2}
\cdot
(\ln(\ln(n)))^{\alpha/2}
\big)
\right)
>0
\end{align*}
if $\beta>2$. 
\end{cor}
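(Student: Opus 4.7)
The plan is straightforward: I obtain the upper bound on $e^\res_n(\Lip_1,\mu)$ by inverting the cost--accuracy relation in Theorem~\ref{thmmlmc}, and I quote the lower bounds on $e_n(\Lip_1,\mu)$ directly from \cite[Thm.~10]{CDMGR09}.

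For the upper bound I proceed as follows. Given $n \in \N$ sufficiently large, I choose $\eps = \eps(n) \in {]0,\exp(-2)]}$ such that $\cost(A^{(\eps)},\Lip_1) \leq n$, and then use the fact that $e^\res_n(\Lip_1,\mu) \leq e(A^{(\eps)},\Lip_1,\mu) \preceq \eps$ by Theorem~\ref{thmmlmc}. The cost displayed in that theorem takes the form $\eps^{-c_1} \cdot L_{\beta,\alpha}(\eps^{-1})$ with exponent $c_1 = \max(2, 2/(\beta-1))$ and a polylogarithmic factor $L_{\beta,\alpha}$ depending on the regime. The key observation is that whenever $n \asymp \eps^{-c_1} \cdot (\ln(\eps^{-1}))^{c_2}$ with $c_1 > 0$ and $c_2 \in \R$, taking logarithms yields $\ln n \asymp \ln(\eps^{-1})$, and hence $\eps \asymp n^{-1/c_1} \cdot (\ln n)^{c_2/c_1}$; an analogous statement holds if $c_2$ is an exponent on $\ln\ln(\eps^{-1})$ rather than $\ln(\eps^{-1})$. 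A case analysis in the four regimes $\beta > 2$; $\beta = 2$, $\alpha \neq 2$; $\beta = 2$, $\alpha = 2$; and $\beta < 2$ then produces exactly the four logarithmic factors stated in the corollary. For instance, in the case $\beta < 2$ I have $c_1 = 2/(\beta-1)$ and $c_2 = \alpha/(1-\beta)$, which gives $\eps \asymp n^{-(\beta-1)/2} \cdot (\ln n)^{-\alpha/2}$, as required.

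The lower bounds on $e_n(\Lip_1,\mu)$ are not proved here. I would simply invoke \cite[Thm.~10]{CDMGR09}, where matching lower bounds for the $n$-th minimal error of arbitrary Monte Carlo algorithms are established in a Banach space setting that covers our Gaussian measure $\mu$ on $H$. Under the covariance eigenvalue hypothesis~\eqref{n-g2}, the results of \cite{CDMGR09}, phrased in terms of average Kolmogorov widths and quantization numbers, translate directly into the polynomial-times-polylog lower bound displayed in the regime $\beta \leq 2$ and into the quoted $\limsup$ statement for $\beta > 2$. No additional argument is needed. The only mildly delicate aspect of the proof is the bookkeeping of the various logarithmic factors during the cost--accuracy inversion, but this is purely arithmetic and I do not foresee any substantial obstacle.
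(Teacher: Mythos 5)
Your treatment of the upper bound is correct and is exactly what the paper intends: the paper's proof simply says the first claim ``follows directly from Theorem~\ref{thmmlmc}'', and your cost--accuracy inversion (with $c_1=\max(2,2/(\beta-1))$ and the regime-dependent polylog factor) carries this out correctly in all four cases; in particular your computation $c_2/c_1=-\alpha/2$ for $\beta<2$ matches the stated exponent.

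For the lower bound, however, your claim that ``no additional argument is needed'' beyond citing \cite[Thm.~10]{CDMGR09} is not quite right, and this is where the paper does real work. The lower bound in \cite[Thm.~10]{CDMGR09} is not stated in terms of the eigenvalue decay \eqref{n-g2}; the paper applies it via the small ball function
\[
\phi(\eps) = -\ln\bigl(\mu(\{ v \in H \colon \|v\|_H \leq \eps\})\bigr),
\]
and the essential intermediate step is to derive
\[
\phi(\eps) \asymp \eps^{-2/(\beta-1)} \cdot (\ln(\eps^{-1}))^{-\alpha/(\beta-1)}
\]
from \eqref{n-g2}, which requires the small-deviation results of Karol'--Nazarov--Nikitin \cite{MR2357702} or Lifshits \cite{MR3024389}. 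Without this translation (or an equivalent one through quantization numbers, which would itself need \cite{LP04}-type results), the citation of \cite{CDMGR09} does not by itself yield the displayed polynomial-times-polylog lower bound for $\beta\leq 2$, nor the $\limsup$ statement for $\beta>2$. So you should add this step explicitly; it is the only substantive content of the paper's proof of the second claim.
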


\begin{proof}
The first claim follows directly from Theorem~\ref{thmmlmc}.
For the proof of the second claim we consider the small ball function
\begin{align*}
\phi(\eps) =
-\ln(\mu(\{ v \in H \colon \|v\|_H \leq \eps\}))
\end{align*}
of $\mu$, where $\eps\in \left]0,\infty\right[$.
{}From \cite[Proposition~4.3]{MR2357702}
or
\cite[Proposition~11.3]{MR3024389}
and
\eqref{n-g2}
we get
\begin{align*}
\phi(\eps)
\asymp
\eps^{-2/(\beta-1)} \cdot
(\ln(\eps^{-1}))^{-\alpha/(\beta-1)}
\end{align*}
for $\eps\in \left]0,\exp(-1)\right]$.
Combining this with \cite[Thm.~10]{CDMGR09} yields the second claim.
\end{proof}

\begin{rem}
In the case $\beta<2$,
Corollary~\ref{corquad} provides sharp upper and lower
bounds, up to multiplicative constants, for the $n$-minimal errors
with $e^\res_n(\Lip_1,\mu) \asymp e_n(\Lip_1,\mu)$.
For $\beta=2$ we have sharp bounds
up to logarithmic factors,
and, in particular, a superiority of Monte Carlo algorithms over 
restricted Monte Carlo may at most be present on the level of such
logarithmic factors.
For $\beta > 2$ the bounds are sharp only
up to logarithmic factors and up to the presence of a $\limsup$
in the lower bound for $e_n(\Lip_1,\mu)$.
Note that for many infinite-dimensional quadrature problems the 
asymptotic behavior of minimal errors is only known up to logarithmic 
factors.

The lower bounds from Corollary~\ref{corquad} are also true, 
if every random bit algorithm is allowed to choose the hierarchy 
of subspaces $H_n$ on its own and,
roughly speaking, without any restriction on the randomness that 
algorithms are allowed to use.
Furthermore, the general situation of a Banach space is considered
in \cite{CDMGR09}. The upper bound from
Corollary~\ref{corquad}
improves the upper bound from \cite[Thm.~10]{CDMGR09}
in terms of powers of $\ln(n)$ or $\ln(\ln(n))$, if $\beta \neq 2$
or $\alpha \geq 1$; the bounds do coincide in the remaining cases.
For simplicity of the presentation we omit the details; instead we
refer to \cite{CDMGR09}.
\end{rem}

\appendix

\section{Asymptotic Properties of $\Phi$ and $\Phi^{-1}$}

We derive some asymptotic properties of the distribution
function $\Phi$ and the inverse distribution function $\Phi^{-1}$
of the standard normal distribution.

In the sequel, we write $f(x) \approx g(x)$ as $x \to \infty$
for two functions $f,g \colon \left]a,\infty\right[ \to \R\setminus\{0\}$
if
\[
\lim_{x \to \infty} f(x) / g(x) = 1.
\]
Analogously we define $f(x) \approx g(x)$ as 
$x \searrow 0$ and $x \nearrow 1$, respectively.

We make use of 
\begin{equation}\label{eq:1-Phi}
1 - \Phi(x) \approx x^{-1} \cdot \varphi(x)
\end{equation}
as $x \to \infty$, which is well known
and follows, e.g., from L'H\^{o}pital's Rule.

\begin{lemma}\label{a:l1}
We have
\begin{align*}
\Phi^{-1}(1-2^{-p}) \approx \sqrt{\ln 4} \cdot p^{1/2}
\end{align*}
as $p \to \infty$.
\end{lemma}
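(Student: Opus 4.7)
The plan is to pass to logarithms in the defining identity $1-\Phi(y_p) = 2^{-p}$ with $y_p = \Phi^{-1}(1-2^{-p})$, and use the standard tail asymptotic \eqref{eq:1-Phi} to read off the leading behaviour of $y_p$. Since $\Phi^{-1}$ is continuous and strictly increasing on $]0,1[$ with $\lim_{t\nearrow 1}\Phi^{-1}(t)=\infty$, we immediately have $y_p \to \infty$ as $p \to \infty$, so \eqref{eq:1-Phi} is applicable with $x=y_p$.

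Taking logarithms in $1-\Phi(y_p)=2^{-p}$ and using \eqref{eq:1-Phi} together with $\varphi(x) = (2\pi)^{-1/2}\exp(-x^2/2)$, I would obtain
\[
-p\ln 2 \;=\; \ln(1-\Phi(y_p)) \;=\; -\tfrac12 y_p^2 - \ln y_p - \tfrac12\ln(2\pi) + o(1)
\]
as $p\to\infty$. Rearranging gives the key relation
\[
y_p^2 \;=\; p\ln 4 \;-\; 2\ln y_p \;+\; O(1).
\]

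From here I would run a simple two-step bootstrap. First, the identity above implies $y_p^2 \leq p\ln 4 + O(1)$ for large $p$ (since $\ln y_p \geq 0$ eventually), hence $y_p = O(\sqrt p\,)$, and in particular $\ln y_p = O(\ln p)$. Plugging this crude bound back yields
\[
y_p^2 \;=\; p\ln 4 \;+\; O(\ln p),
\]
so that $y_p^2/(p\ln 4) \to 1$, which is precisely $y_p \approx \sqrt{\ln 4}\cdot p^{1/2}$.

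I do not expect any genuine obstacle here; the only care needed is the standard bootstrap to turn the a priori bound $y_p = O(\sqrt p)$ into the right logarithmic correction, and to verify that the error term coming from \eqref{eq:1-Phi} is truly $o(1)$ after taking logarithms (which is immediate since $1-\Phi(y_p)/(y_p^{-1}\varphi(y_p)) \to 1$ implies its logarithm tends to $0$).
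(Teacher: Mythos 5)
Your proof is correct. Both your argument and the paper's rest on the same two ingredients: the defining identity $1-\Phi(y_p)=2^{-p}$ and the tail asymptotic \eqref{eq:1-Phi}; the difference lies only in how the equivalence $y_p^2 \approx p\ln 4$ is extracted at the end. The paper changes variables to $y=\Phi^{-1}(x)$ and applies L'H\^opital's Rule once to the ratio of $-\log_2\bigl(1-\Phi(y)\bigr)$ and $y^2/\ln 4$, which settles the matter in one stroke without any a priori bound on $y_p$. You instead take logarithms of the asymptotic relation to get $y_p^2 = p\ln 4 - 2\ln y_p + O(1)$ and then bootstrap: the crude bound $y_p=O(\sqrt{p})$ feeds back to give $y_p^2 = p\ln 4 + O(\ln p)$. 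Your route is slightly longer but more explicit, and it buys a quantitative second-order error term ($O(\ln p)$ rather than mere asymptotic equivalence), which the paper's L'H\^opital argument does not directly provide; conversely, the paper's version avoids the need to justify the bootstrap. All the individual steps you outline (applicability of \eqref{eq:1-Phi} since $y_p\to\infty$, the $o(1)$ coming from the logarithm of a ratio tending to $1$, and the sign of $\ln y_p$ for large $p$) are sound.
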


\begin{proof}
With $c = \sqrt{\ln 4}$ and $x = 1-2^{-p}$, i.e., $p = -\log_2(1-x)$,
we have to show that
\begin{align*}
\Phi^{-1}(x) \approx c \cdot (-\log_2(1-x))^{1/2}
\end{align*}
as $x \nearrow 1$.
Setting $x = \Phi(y)$ this is equivalent to
\[
y \approx c \cdot \bigl(-\log_2(1-\Phi(y))\bigr)^{1/2}
\]
as $y \to \infty$.
Due to \eqref{eq:1-Phi} and L'H\^{o}pital's Rule
we get in fact
\[
\bigl(-\log_2(1-\Phi(y))\bigr)^{1/2} 
\approx y/c
\]
as $y\to\infty$.
\end{proof}

\begin{lemma}\label{a:l:h}
Let $h$ be defined by \eqref{eq:h}. Then we have
\begin{align*}
2^{-p}\, p \cdot h\bigl(\Phi^{-1}(1-2^{-p})\bigr) 
\approx
\bigl(\sqrt{2\pi} \, \ln 4\bigr)^{-1}
\end{align*}
as $p \to \infty$.
\end{lemma}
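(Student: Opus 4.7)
The plan is to combine Lemma~\ref{a:l1}, the standard tail estimate \eqref{eq:1-Phi}, and a one-line asymptotic expansion of $h$ at infinity.

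First I would establish the tail asymptotics of $h$, namely
\[
h(a) \approx \frac{e^{a^2/2}}{a} \qquad \text{as } a \to \infty.
\]
This is an immediate application of L'H\^opital's Rule to $h(a)\big/\bigl(e^{a^2/2}/a\bigr)$, noting that the derivative of $e^{a^2/2}/a$ is $e^{a^2/2}\,(1 - 1/a^2)$ which is asymptotically equivalent to $h'(a)=e^{a^2/2}$.

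Next, writing $a_p = \Phi^{-1}(1-2^{-p})$, I would translate the tail bound \eqref{eq:1-Phi} into an asymptotic for $\varphi(a_p)$: since $1-\Phi(a_p)=2^{-p}$ and $1-\Phi(x)\approx x^{-1}\varphi(x)$, we obtain
\[
\varphi(a_p) \approx a_p \cdot 2^{-p}
\]
as $p\to\infty$. Using $e^{a^2/2} = \bigl(\sqrt{2\pi}\,\varphi(a)\bigr)^{-1}$, the preceding step yields
\[
e^{a_p^2/2} \approx \frac{2^{p}}{\sqrt{2\pi}\cdot a_p},
\]
and plugging this into the asymptotic for $h$ gives
\[
h(a_p) \approx \frac{2^{p}}{\sqrt{2\pi}\cdot a_p^{2}}.
\]

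Finally, I would multiply by $2^{-p} p$ and invoke Lemma~\ref{a:l1}, which says $a_p^2\approx \ln 4\cdot p$, to conclude
\[
2^{-p}\,p\cdot h(a_p) \approx \frac{p}{\sqrt{2\pi}\cdot a_p^2}\approx \frac{1}{\sqrt{2\pi}\,\ln 4},
\]
as required. There is no serious obstacle: the only point that needs a little care is the bookkeeping of constants in the step $e^{a^2/2}=(\sqrt{2\pi}\,\varphi(a))^{-1}$, which is where the factor $\sqrt{2\pi}$ in the denominator of the target constant comes from, combined with the cancellation of $p$ between $p$ itself and $a_p^2 \approx p\ln 4$.
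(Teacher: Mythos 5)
Your argument is correct and follows essentially the same route as the paper's proof: the asymptotics $h(a)\approx a^{-1}e^{a^2/2}$ via L'H\^opital, the tail estimate \eqref{eq:1-Phi} applied at $a_p=\Phi^{-1}(1-2^{-p})$, and Lemma~\ref{a:l1} for $a_p^2\approx p\ln 4$. The only difference is cosmetic bookkeeping of where the factor $\sqrt{2\pi}$ enters.
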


\begin{proof}
Let $a = \Phi^{-1}(1-2^{-p})$.
Use L'H\^{o}pital's Rule, or integration 
by parts, to verify
\[
h(a) \approx a^{-1} \cdot \exp(a^2/2)
\]
as $a \to \infty$. 
Moreover, observe that
\[
p \approx a^2 / \ln 4
\]
as $p \to \infty$, see Lemma \ref{a:l1}, and note that $1-\Phi(a)=2^{-p}$.
Altogether with \eqref{eq:1-Phi}, we obtain
\[
2^{-p}\, p \cdot h(a) 
\approx
(1-\Phi(a)) \cdot a^2 / \ln 4 \cdot h(a)
\approx
\bigl(\sqrt{2\pi} \, \ln 4\bigr)^{-1}
\]
as $p\to\infty$.
\end{proof}

\begin{lemma}\label{a:l:g}
Let $g$ be defined by \eqref{eq:g}. Then we have
\begin{align*}
g\bigl(\Phi^{-1}(1 - 2^{-(p+1)})\bigr)
\approx
2^{-p} \cdot p^{-1} / \ln 4
\end{align*}
as $p \to \infty$.
\end{lemma}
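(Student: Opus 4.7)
The plan is to first obtain an asymptotic equivalence for $g(a)$ as $a \to \infty$, and then substitute $a = \Phi^{-1}(1 - 2^{-(p+1)})$, invoking Lemma~\ref{a:l1} together with \eqref{eq:1-Phi}.

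First I would compute $g$ in closed form. From $\varphi'(x) = -x\,\varphi(x)$ and integration by parts one has $\int_a^\infty x\,\varphi(x)\,\mathrm{d}x = \varphi(a)$ and $\int_a^\infty x^2\,\varphi(x)\,\mathrm{d}x = a\,\varphi(a) + (1-\Phi(a))$. Expanding $(x-a)^2 = x^2 - 2ax + a^2$ in \eqref{eq:g} and collecting terms then gives
\[
g(a) = (1+a^2)\,(1-\Phi(a)) - a\,\varphi(a).
\]

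The main obstacle is the next step: showing $g(a) \approx 2\,a^{-3}\,\varphi(a)$ as $a \to \infty$. If one inserts only the leading Mills' estimate $1 - \Phi(a) \approx a^{-1}\,\varphi(a)$ from \eqref{eq:1-Phi}, the two $a\,\varphi(a)$ contributions in the closed form cancel exactly, leaving zero at leading order. So I would need the refined two-term expansion
\[
1 - \Phi(a) = \varphi(a) \cdot \bigl(a^{-1} - a^{-3} + 3\,a^{-5} + O(a^{-7})\bigr),
\]
which comes from iterating the integration-by-parts argument behind \eqref{eq:1-Phi}. Using the identity $(1+a^2)(a^{-1} - a^{-3} + 3\,a^{-5}) = a + 2\,a^{-3} + 3\,a^{-5}$, the $a\,\varphi(a)$ contributions cancel and the closed form yields $g(a) = 2\,a^{-3}\,\varphi(a) + O(a^{-5}\,\varphi(a))$.

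Finally, I would set $a = a(p) = \Phi^{-1}(1 - 2^{-(p+1)})$. Lemma~\ref{a:l1} gives $a(p)^2 \approx \ln 4 \cdot p$, while $1 - \Phi(a(p)) = 2^{-(p+1)}$ combined with \eqref{eq:1-Phi} gives $\varphi(a(p)) \approx a(p) \cdot 2^{-(p+1)}$. Plugging into the equivalence from the preceding step,
\[
g(a(p)) \approx \frac{2\,a(p) \cdot 2^{-(p+1)}}{a(p)^3} = \frac{2^{-p}}{a(p)^2} \approx \frac{2^{-p}\,p^{-1}}{\ln 4},
\]
as claimed.
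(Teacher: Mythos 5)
Your proof is correct, but it takes a genuinely different route from the paper's. You evaluate $g$ in closed form, $g(a) = (1+a^2)\,(1-\Phi(a)) - a\,\varphi(a)$, and then resolve the leading-order cancellation by invoking the three-term Mills-ratio expansion $1-\Phi(a) = \varphi(a)\,\bigl(a^{-1}-a^{-3}+3a^{-5}+O(a^{-7})\bigr)$; you correctly identify that the first-order estimate \eqref{eq:1-Phi} alone is insufficient here (and you rightly carry the $3a^{-5}$ term, without which the error would be of the same order $a^{-3}\varphi(a)$ as the answer). The paper instead avoids the cancellation issue entirely: it reduces the claim to $g(a)\approx 2\,\varphi(a)\,a^{-3}$ and establishes that equivalence by applying L'H\^{o}pital's rule three times to the ratio, using $g'''(a) = -2\,\varphi(a)$, so that only the first-order estimate \eqref{eq:1-Phi} is ever needed. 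Your version buys an explicit expression for $g$ and a quantitative remainder $O(a^{-5}\varphi(a))$, at the price of importing (or re-deriving by iterated integration by parts) the higher-order asymptotics of $1-\Phi$; the paper's version is self-contained modulo \eqref{eq:1-Phi} but yields only the bare equivalence. The final substitution step, using $1-\Phi(a(p)) = 2^{-(p+1)}$, $\varphi(a(p))\approx a(p)\cdot 2^{-(p+1)}$, and $a(p)^2\approx p\,\ln 4$ from Lemma~\ref{a:l1}, matches the paper's.
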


\begin{proof}
Set $a = \Phi^{-1}(1 - 2^{-(p+1)})$. At first we show
\begin{equation}\label{eq:local}
g(a)
\approx
2 \cdot (1 - \Phi(a)) \cdot a^{-2}
\end{equation}
as $a \to \infty$. By \eqref{eq:1-Phi} this reduces to showing
\[
g(a)
\approx
2 \cdot \varphi(a) \cdot a^{-3}
\]
as $a \to \infty$. To this end we compute the following derivatives
\begin{align*}
g'(a) &= 
-2 \cdot
\int_{[a,\infty[} (x-a) \cdot \varphi(x) \, \mathrm{d}x,\\
g''(a) &= 2 \cdot \int_{[a,\infty[} \varphi(x)\, \mathrm{d}x,\\
g'''(a) &= -2 \cdot \varphi(a)
\end{align*}
as well as
\begin{align*}
\frac{d}{da}
\left(\varphi(a) \cdot a^{-3}\right) 
&= - \varphi(a) \cdot a^{-2} - 3 \cdot \varphi(a) \cdot a^{-4},\\
\frac{d}{da}
\left(-2 \cdot \varphi(a) \cdot a^{-2}\right) 
&= 2 \cdot \varphi(a) \cdot a^{-1} + 4 \cdot \varphi(a) \cdot a^{-3},\\
\frac{d}{da}
\left(2 \cdot \varphi(a) \cdot a^{-1}\right) 
&= -2 \cdot \varphi(a) - 2 \cdot \varphi(a) \cdot a^{-2}.
\end{align*}
Using three times L'H\^{o}pital's Rule yields
\begin{align*}
\lim_{a \to \infty} \frac{g(a)}{2 \cdot \varphi(a) \cdot a^{-3}}
&= \lim_{a \to \infty} \frac{g'(a)}{-2 \cdot \varphi(a) \cdot a^{-2}}\\
&= \lim_{a \to \infty} \frac{g''(a)}{2 \cdot \varphi(a) \cdot a^{-1}}\\
&= \lim_{a \to \infty} 
\frac{g'''(a)}{
-2 \cdot \varphi(a)
} = 1.
\end{align*}
Finally, having \eqref{eq:local} at hand, Lemma~\ref{a:l1} finishes the 
proof.
\end{proof}

\begin{lemma}\label{a:l:varphi-Phi}
For $0 < x < 1$ define
\[
u(x) = \varphi\bigl(\Phi^{-1}(1-x)\bigr)
\]
and
\[
v(x) = x \cdot (\ln (x^{-1}))^{1/2}.
\]
Then we have
\[
u(x)
\approx
\sqrt{2} \cdot v(x) 
\]
as $x \searrow 0$.
\end{lemma}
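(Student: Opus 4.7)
The plan is to reduce the statement to the known tail bound \eqref{eq:1-Phi} by means of the substitution $y = \Phi^{-1}(1-x)$. Under this substitution $x = 1 - \Phi(y)$ and $y \to \infty$ as $x \searrow 0$, so it suffices to show that
\[
\frac{\varphi(y)}{(1-\Phi(y)) \cdot \bigl(-\ln(1-\Phi(y))\bigr)^{1/2}}
\approx \sqrt{2}
\]
as $y \to \infty$.

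First, I would handle the numerator and the factor $x$ in $v(x)$ together: by \eqref{eq:1-Phi},
\[
\frac{\varphi(y)}{1-\Phi(y)} \approx y
\]
as $y \to \infty$, so it remains to show that $\bigl(-\ln(1-\Phi(y))\bigr)^{1/2} \approx y/\sqrt{2}$. Using \eqref{eq:1-Phi} once more,
\[
-\ln(1-\Phi(y)) = -\ln\varphi(y) + \ln y + \ln\bigl(1 + o(1)\bigr)
= \frac{y^2}{2} + \tfrac{1}{2}\ln(2\pi) + \ln y + o(1),
\]
and dividing by $y^2/2$ shows
\[
-\ln(1-\Phi(y)) \approx \frac{y^2}{2}
\]
as $y \to \infty$. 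Taking square roots gives the desired asymptotic $y/\sqrt{2}$.

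Combining both asymptotics yields
\[
\frac{u(x)}{v(x)}
= \frac{\varphi(y)}{1-\Phi(y)} \cdot \bigl(-\ln(1-\Phi(y))\bigr)^{-1/2}
\approx y \cdot \frac{\sqrt{2}}{y} = \sqrt{2},
\]
which is the claim. There is no real obstacle here; the only point requiring a little care is the bookkeeping that the lower-order terms $\ln y$ and $\tfrac{1}{2}\ln(2\pi)$ are indeed negligible compared to $y^2/2$, which is immediate since $y \to \infty$.
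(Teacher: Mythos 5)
Your proof is correct, and it takes a genuinely different route from the paper's. The paper squares both functions, writes out first and second derivatives of $\tilde u=u^2$ and $\tilde v=v^2$, applies L'H\^opital's Rule twice to reduce the limit of $\tilde v/\tilde u$ to $\lim_{x\searrow 0}\ln(x^{-1})/(\Phi^{-1}(1-x))^2$, and then invokes Lemma~\ref{a:l1} to evaluate that limit as $1/2$. You instead substitute $y=\Phi^{-1}(1-x)$ and use the tail estimate \eqref{eq:1-Phi} twice: once to get $\varphi(y)/(1-\Phi(y))\approx y$, and once, via a direct expansion of the logarithm, to get $-\ln(1-\Phi(y))\approx y^2/2$. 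The latter step is exactly the content of Lemma~\ref{a:l1} in disguise (that lemma says $(\Phi^{-1}(1-x))^2\approx 2\ln(x^{-1})$), so in effect you re-derive it by elementary bookkeeping of the lower-order terms $\ln y$ and $\tfrac12\ln(2\pi)$ rather than by the paper's L'H\^opital argument. What your approach buys is transparency and brevity: no differentiation of the squared functions and no iterated L'H\^opital; what the paper's approach buys is uniformity of method, since the same derivative-plus-L'H\^opital template is used for all the appendix lemmas. One cosmetic slip: from $1-\Phi(y)=y^{-1}\varphi(y)(1+o(1))$ one gets $-\ln(1-\Phi(y))=-\ln\varphi(y)+\ln y-\ln(1+o(1))$, so the sign on your last term is off, but since that term is $o(1)$ it is immaterial to the conclusion.
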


\begin{proof}
Set $\tilde{u}(x) = u(x)^2$ as well as $\tilde{v}(x) = v(x)^2$. 
The derivatives read as
\begin{align*}
\tilde{u}'(x)
&=
2 \cdot \varphi\bigl(\Phi^{-1}(1-x)\bigr) \cdot \Phi^{-1}(1-x),\\
\tilde{u}''(x)
&=
2 \cdot \bigl(\Phi^{-1}(1-x)\bigr)^2 - 2
\end{align*}
as well as
\begin{align*}
\tilde{v}'(x)
&=
- 2 \cdot x \cdot \ln(x) - x,\\
\tilde{v}''(x)
&=
-2 \cdot \ln(x) - 3.
\end{align*}
Consequently, two times L'H\^{o}pital yields
\begin{align*}
\lim_{x \to 0} \frac{\tilde{v}(x)}{\tilde{u}(x)}
=
\lim_{x \to 0} \frac{-2 \cdot \ln(x) - 3}{2 \cdot (\Phi^{-1}(1-x))^2 - 2}
=
\lim_{x \to 0} \frac{\ln(x^{-1})}{(\Phi^{-1}(1-x))^2}.
\end{align*}
Now, Lemma~\ref{a:l1} yields
\begin{align*}
\bigl(\Phi^{-1}(1-x)\bigr)^2
\approx
2 \cdot \ln 2 \cdot \log_2(x^{-1})
=
2 \cdot \ln(x^{-1})
\end{align*}
as $x \searrow 0$.
Altogether we obtain
\[
\tilde{v}(x) \approx \tilde{u}(x)/2
\]
as $x \searrow 0$.
\end{proof}

\begin{lemma}\label{a:Phi-mean-val}
We have
\begin{align*}
\Phi^{-1}(b) - \Phi^{-1}(a) 
\succeq (b-a) \cdot (1-a)^{-1} \cdot (-\ln(1-a))^{-1/2}
\end{align*}
uniformly in  $1/2 < a < b < 1$.
\end{lemma}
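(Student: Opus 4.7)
The plan is to reduce the claim to an upper bound on $\varphi \circ \Phi^{-1}$ and then invoke Lemma~\ref{a:l:varphi-Phi}.

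First, I would apply the mean value theorem to $\Phi^{-1}$ on the interval $[a,b]$, using that $(\Phi^{-1})'(y) = 1/\varphi(\Phi^{-1}(y))$. This yields, for some $\xi \in (a,b)$,
\[
\Phi^{-1}(b) - \Phi^{-1}(a) = \frac{b-a}{\varphi(\Phi^{-1}(\xi))}.
\]
Next, I would exploit the fact that $\Phi^{-1}$ is monotonically increasing and that $\varphi$ is monotonically decreasing on $[0,\infty[$. Since $a > 1/2$, we have $\Phi^{-1}(\xi) > \Phi^{-1}(a) \geq 0$, hence $\varphi(\Phi^{-1}(\xi)) \leq \varphi(\Phi^{-1}(a))$, so
\[
\Phi^{-1}(b) - \Phi^{-1}(a) \geq \frac{b-a}{\varphi(\Phi^{-1}(a))}.
\]

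It therefore suffices to establish the uniform upper bound
\[
\varphi(\Phi^{-1}(a)) \preceq (1-a) \cdot (-\ln(1-a))^{1/2}
\]
for $a \in {]1/2,1[}$. Setting $x = 1-a$, Lemma~\ref{a:l:varphi-Phi} gives $\varphi(\Phi^{-1}(1-x)) \approx \sqrt{2} \cdot x \cdot (\ln(x^{-1}))^{1/2}$ as $x \searrow 0$, which handles values of $a$ close to $1$. For the remaining range, say $a \in {]1/2,1-\delta]}$ with some small $\delta > 0$, both $\varphi(\Phi^{-1}(a))$ and $(1-a)(-\ln(1-a))^{1/2}$ are continuous and strictly positive, so their ratio is bounded on this compact set. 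Combining the two ranges yields the required uniform bound.

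The main (minor) obstacle is simply the bookkeeping to pass from the pointwise asymptotics of Lemma~\ref{a:l:varphi-Phi} to a uniform estimate over the whole interval $]1/2,1[$; once that is in place, substitution into the mean value identity completes the proof.
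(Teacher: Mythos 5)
Your proposal is correct and follows essentially the same route as the paper: the mean value theorem for $\Phi^{-1}$, the monotonicity argument to bound $(\Phi^{-1})'(\xi)$ from below by $\bigl(\varphi(\Phi^{-1}(a))\bigr)^{-1}$, and then Lemma~\ref{a:l:varphi-Phi} (plus the routine compactness argument for $a$ bounded away from $1$) to control $\varphi(\Phi^{-1}(a))$. The paper's proof is just a terser version of the same argument.
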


\begin{proof}
The mean value theorem yields the existence of 
$m \in \left]a,b\right[$ such that
\begin{align*}
\Phi^{-1}(b) - \Phi^{-1}(a)
=
(b-a) \cdot (\Phi^{-1})'(m)
\geq
(b-a) \cdot \bigl(\varphi(\Phi^{-1}(a))\bigr)^{-1}.
\end{align*}
The statement is now an immediate consequence of 
Lemma~\ref{a:l:varphi-Phi}.
\end{proof}

\section*{Acknowledgment}
The authors are grateful to Steffen Omland
for many valuable discussions and contributions 
at an early stage of this project.
We thank an anonymous referee for providing us with
the reference \cite{XB17}.

Lukas Mayer is supported by the Deutsche Forschungsgemeinschaft
(DFG) within the RTG 1932 
`Stochastic Models for Innovations in the Engineering Sciences'.

\end{document}